\newtheorem{definition}{Definition}[section]
\newtheorem{theorem}[definition]{Theorem}
\newtheorem{proposition}[definition]{Proposition}
\newtheorem{lemma}[definition]{Lemma}
\newcommand{\medbias}{\mathrm{Med}\mbox{-}\mathrm{bias}}
\newcommand{\Rmedbias}{\mathrm{R}\mbox{-}\mathrm{Med}\mbox{-}\mathrm{bias}}
\newcommand{\Tmedbias}{\mathrm{T}\mbox{-}\mathrm{Med}\mbox{-}\mathrm{bias}}
\newcommand{\Omedbias}{\mathrm{O}\mbox{-}\mathrm{Med}\mbox{-}\mathrm{bias}}
\newcommand{\RHull}{\mathrm{Rect}\mbox{-}\mathrm{Hull}}
\newcommand{\MCH}{\mathrm{MCH}}
\title{Rectangular Hull Confidence Regions for Multivariate Parameters}
\author[1]{Aniket Jain}
\author[2]{Arun Kumar Kuchibhotla}
\affil[1]{Indian Statistical Institute, Kolkata}
\affil[2]{Department of Statistics \& Data Science, Carnegie Mellon University}
\date{}
\begin{document}

\maketitle

\begin{abstract}
We introduce three notions of multivariate median bias, namely, rectilinear, Tukey, and orthant median bias. Each of these median biases is zero under a suitable notion of multivariate symmetry. We study the coverage probabilities of rectangular hull of $B$ independent multivariate estimators, with special attention to the number of estimators $B$ needed to ensure a miscoverage of at most $\alpha$. It is proved that for estimators with zero orthant median bias, we need $B\ge c\log_2(d/\alpha)$ for some constant $c > 0$. Finally, we show that there exists an asymptotically valid (non-trivial) confidence region for a multivariate parameter $\theta_0$ if and only if there exists a (non-trivial) estimator with an asymptotic orthant median bias of zero.
\end{abstract}

\section{Introduction}
Inference from data is the cornerstone of Statistics. There exist numerous methods in the statistical literature for constructing confidence intervals for parameters that are either finite-sample or large-sample (i.e., asymptotically) valid. Some of the most prominent methods include Wald intervals, jackknife, bootstrap, and subsampling. Each of these methods is applicable and/or valid under a variety of assumptions on the data and the estimator used. For a large class of parameters and under mild assumptions, no finite-sample valid, bounded confidence intervals can be constructed~\citep{bahadur1956nonexistence,tibshirani1988sensitive,pfanzagl1998nonexistence}. Under a variety of assumptions, the Wald and resampling based confidence intervals are shown to be asymptotically valid, both in pointwise and uniform sense~\citep{dasgupta2008asymptotic,shao2012jackknife}. The assumptions can be very varied. For example, Wald intervals are asymptotically valid assuming that the rate of convergence and the limiting distribution of the estimator are known (potentially up to an unknown parameter). Bootstrap is valid if the functional of interest is Hadamard differentiable~\citep{shao2012jackknife}. Subsampling is pointwise valid as long as the rate of convergence can be consistently estimated~\citep{bertail1999subsampling}. There exist several examples where all these methods can fail to be uniformly valid over a class of data-generating processes~\citep{andrews2010asymptotic,hirano2012impossibility}. 

There exist, however, numerous inference problems where all these methods fail to provide valid confidence intervals. In constrained M-estimation problems or parameter on the boundary problems, these methods are either inapplicable or do not yield uniform validity. Another example is when the rate of convergence is bounded with sample size. In a recent paper~\citep{kuchibhotla2023median}, it was shown that for any functional of interest for which asymptotically valid non-trivial confidence intervals can be constructed, there exists an asymptotically {\em median unbiased} estimator. Furthermore, the HulC methodology of~\cite{kuchibhotla2021hulc} is known to be asymptotically valid when constructed using an asymptotically median unbiased estimator. Therefore, an asymptotically valid confidence interval exists if and only if there exists an estimator with respect to which HulC is asymptotically valid. To our knowledge, this is not true for the traditional methods of Wald and resampling. The HulC methodology is rather simple to implement. For a parameter/functional of interest $\theta_0$, let $\widehat{\theta}_n$ be the estimator based on $n$ observations. Split the $n$ observations randomly into $B_{\alpha} = \lceil\log_2(2/\alpha)\rceil$ batches of approximately equal sample sizes. Then find the version of $\widehat{\theta}_n$ on each batch and return the convex hull of these univariate estimators as the confidence interval. In the multivariate case, one can consider either the convex hull or the rectangular hull with more number of batches. The validity results are obtained using the union bound, which can be sub-optimal. 

The results of~\cite{kuchibhotla2023median} on the equivalence of valid inference and the existence of asymptotically median unbiased estimators are also restricted to one-dimensional functionals of interest. It is a non-trivial problem to extend the results of that work to the multivariate case given that several notions of multivariate median exist. This is precisely the problem we tackle in this paper. We define a notion of multivariate median bias and show that a locally bounded confidence region exists if and only if there exists an estimator that is asymptotically ``median unbiased.'' The notion of multivariate median bias we consider is based on the median unbiasedness definition used in~\cite{sung1990generalized}. One disadvantage of this notion of multivariate median bias is that even some centrally symmetric distributions are biased. For example, $N(0,\Sigma)$ with a non-diagonal covariance matrix $\Sigma$ turns out have a non-zero median bias.

The remaining article is organized as follows. In Section~\ref{sec:multivariate-median-bias}, we discuss two notions of multivariate median bias, namely, orthant median bias and half-space/Tukey median bias. Both these notions become the univariate median bias discussed in~\cite{kuchibhotla2021hulc}. In Section~\ref{sec:orthat-med-bias}, we restrict our discussion to orthant median bias for multivariate random variables whose marginal distributions are continuous at zero and the construction of confidence regions for arbitrary multivariate parameters based on estimators with bounded orthant median bias. We also discuss the definition of orthant median bias for general distributions that can have a point mass on one or more coordinate axes. In Section~\ref{sec:OMR-valid-inference}, we show one of our prominent results that bounded asymptotically valid confidence regions for a parameter exist if and only if there exists a non-trivial estimator with asymptotic orthant median bias converging to zero asymptotically. We end the paper with Section~\ref{sec:conclusions} by summarizing the article and discussing some future directions.
\section{Multivariate median bias}\label{sec:multivariate-median-bias}
Given the natural ordering of real numbers, the sample median (or the center of the univariate data cloud) is uniquely defined and can be obtained using several equivalent characterizations~\citep{Oja2013}. Each of these characterizations leads to a different vector in the multidimensional space. Using distribution functions, a univariate estimator $\widehat{\theta}_n$ computed based on observations $X_1, \ldots, X_n$ from $P$ is said to have a median $\theta_0$ if 
\[
\min\left\{\mathbb{P}_P(\widehat{\theta}_n \ge \theta_0),\,\mathbb{P}_P(\widehat{\theta}_n \le \theta_0)\right\} \ge \frac{1}{2}.
\]
This can be equivalently written as $\min_{s\in\{-1, 0, 1\}}\mathbb{P}(s(\widehat{\theta}_n - \theta_0) \ge 0) \ge 1/2$,
where we think of $\{-1, 0, 1\}$ as the possible signs of numbers on the real line (obtained by considering $x/|x|$). Correspondingly, the median bias of the estimator $\widehat{\theta}_n$ about $\theta_0$ is defined as
\[
\medbias_{P}(\widehat{\theta}_n; \theta_0) = \left(\frac{1}{2} - \min_{s\in\{-1, 0, 1\}}\mathbb{P}_P(s(\widehat{\theta}_n - \theta_0) \ge 0)\right)_+.
\]
(We reserve $P$ for the distribution of the underlying data $X_1, \ldots, X_n$.)
There are several versions of sign vectors in the multivariate space each of which lead to different versions of median and median bias. For example, in $\mathbb{R}^d$, consider
\begin{align*}
\mathcal{S}_{\texttt{c-sign}} &:= \{s\in\mathbb{R}^d:\, s_{j_0} \in \{-1, 1\}\mbox{ for some }j_0\mbox{ and }s_j = 0\mbox{ for }j\neq j_0\},\\
&= \{s\in\mathbb{R}^d:\, s = \eta e_j, \eta\in\{-1, 1\}, 1\le j\le d\},\\
\mathcal{S}_{\texttt{d-sign}} &:= \{s/\|s\|_2:\, s\in\mathbb{R}^d\setminus\{0\}\}\cup\{0\} = \mathbb{S}^{d-1}\cup\{0\}.
\end{align*}
The set $\mathcal{S}_{\texttt{c-sign}}$ is the set of all coordinate signs, one coordinate at a time, and the set $\mathcal{S}_{\texttt{d-sign}}$ is the set of all directions in $\mathbb{R}^d$ (i.e., the unit sphere). Here $\|\cdot\|_2$ denotes the Euclidean norm of the vector. For any given set $\mathcal{S}$ of sign vectors, we can define the multivariate median of a random vector $X$ as the vector $\theta_{0,\mathcal{S}}$ that maximizes $\min_{s\in\mathcal{S}}\mathbb{P}(s^{\top}X \ge 0)$; if no unique maximizer exists, then any of the maximizers can be used. In particular, with $\mathcal{S} = \mathcal{S}_{\texttt{c-sign}}$, $\theta_{0,\mathcal{S}}$ is the coordinate-wise median or the rectilinear median~\citep{basu2012projection}. With $\mathcal{S} = \mathcal{S}_{\texttt{d-sign}}$, $\theta_{0,\mathcal{S}}$ is the half-space or Tukey median~\citep{zhu2020does}. The corresponding median biases becomes
\begin{equation}\label{eq:R-med-bias}
\begin{split}
\medbias_{P,\mathcal{S}_{\texttt{c-sign}}}(\widehat{\theta}_n; \theta_0) &:= \left(\frac{1}{2} - \min_{s\in\mathcal{S}_{\texttt{c-sign}}}\mathbb{P}(s^{\top}(\widehat{\theta}_n - \theta_0) \ge 0)\right)_+\\ 
&= \left(\frac{1}{2} - \min_{\eta\in\{-1, 1\},1\le j\le d}\mathbb{P}(\eta e_j^{\top}(\widehat{\theta}_n - \theta_0) \ge 0)\right)_+,
\end{split}
\tag{$\Rmedbias$}
\end{equation}
\begin{equation}\label{eq:T-med-bias}
\begin{split}
\medbias_{P,\mathcal{S}_{\texttt{d-sign}}}(\widehat{\theta}_n; \theta_0) &:= \left(\frac{1}{2} - \min_{s\in\mathcal{S}_{\texttt{d-sign}}}\mathbb{P}(s^{\top}(\widehat{\theta}_n - \theta_0) \ge 0)\right)_+,\\
&= \left(\frac{1}{2} - \min_{\lambda\in\mathbb{R}^d}\mathbb{P}(\lambda^{\top}(\widehat{\theta}_n - \theta_0) \ge 0)\right)_+.
\end{split}
\tag{$\Tmedbias$}
\end{equation}
Given the corresponding medians, we refer to $\medbias_{P,\mathcal{S}_{\texttt{c-sign}}}$ as the {\em rectilinear median bias}, or simply, R-median bias and $\medbias_{P,\mathcal{S}_{\texttt{d-sign}}}$ as the {\em Tukey median bias}, or simply, T-median bias. Correspondingly, we sometimes write $\Rmedbias_P(\widehat{\theta}_n; \theta_0)$ in place of $\medbias_{P,\mathcal{S}_{\texttt{c-sign}}}(\widehat{\theta}_n; \theta_0)$ and $\Tmedbias_P(\widehat{\theta}_n; \theta_0)$ in place of $\medbias_{P,\mathcal{S}_{\texttt{d-sign}}}(\widehat{\theta}_n; \theta_0)$.
\cite{kuchibhotla2021hulc} discussed potential extensions of HulC using estimators with controlled median biases in either~\eqref{eq:R-med-bias} or~\eqref{eq:T-med-bias}. Their results imply that for $\mathcal{S}_{\texttt{c-sign}}$, the rectangular hull of sufficient number (approximately $\log(d)$ many) of estimators achieves asymptotically valid coverage, and for $\mathcal{S}_{\texttt{d-sign}}$, the convex hull of sufficient number (approximately $d$ many) of estimators achieves asymptotically valid coverage. 

Another class of multivariate medians can be defined by viewing $[0, \infty)$ and $(-\infty, 0]$ as orthants in the real line. From this viewpoint, we define the {\em orthant median bias} as 
\begin{equation}\label{eq:O-med-bias}
\begin{split}
\medbias_{P,\texttt{orth}}(\widehat{\theta}; \theta_0) 
~&=~ 2^{d-1}\left(\frac{1}{2^d} - \min_{A\in\mathcal{O}^{d}}\mathbb{P}(\widehat{\theta} - \theta_0\in A)\right)_+.
\end{split}
\tag{$\Omedbias$}
\end{equation}
\noeqref{eq:O-med-bias}
if $\widehat{\theta}$ has a continuous distribution. Here $\mathcal{O}^d$ represents the collection of orthants in $\mathbb{R}^d$. We use the notation $\Omedbias_P(\widehat{\theta}_n; \theta_0)$ for orthant median bias.
The restriction to continuous distribution is discussed at length in the next section and the extension of the definition of general distributions in Section~\ref{subsec:general-dist}. In this paper, for a given orthant median bias, we consider the rectangular hull of a sufficient number of estimates as the extension of univariate HulC. We show in some sense that the orthant median bias is the right notion for rectangular hull confidence regions by providing lower bounds on the required number of estimators for valid coverage.

Note that similar to multiple notions of multivariate medians, there are several notions of multivariate symmetry~\citep{serfling2006multivariate}. The R-/T-/O-median biases are zero for multivariate distributions satisfying different types of symmetry. For example, R-median bias is zero for all multivariate distributions whose marginals are symmetric around zero; this is a weaker notion of symmetry than central symmetry. The T-median bias is zero for all angular and half-space symmetric distributions; in fact, $\Tmedbias_P(\widehat{\theta}_n; \theta_0) = 0$ if and only if $\widehat{\theta}_n - \theta_0$ is half-space symmetric around zero. (Even though angular symmetry is more restrictive than half-space symmetry, they are equivalent for both continuous distributions and discrete distributions with no mass at the ``center''~\citep[Theorem 2.6]{zuo2000performance}.) Finally, the O-median bias is zero for all sign symmetric distributions. See~\cite{serfling2006multivariate} for these notions of multivariate symmetry and their characterizations. Also, see Section~\ref{appsec:med-bias-symmetry} for a proof of these relations between zero median bias and symmetry. The target values of probabilities $(1/2, 1/2, 1/2^d)$ used in~\eqref{eq:R-med-bias},~\eqref{eq:T-med-bias}, and~\eqref{eq:O-med-bias} are based on the values attained, respectively, by continuous symmetric random variables that are symmetric in the sense of central symmetry, half-space symmetry, and sign symmetry.
\subsection{Comparison of different median biases}
It is clear that all the notions of median bias discussed above become the same if $d = 1$. For $d > 1$, the comparison of these notions is not so obvious. Because $\mathcal{S}_{\texttt{d-sign}}\supsetneq\mathcal{S}_{\texttt{c-sign}}$, it follows that the rectilinear median bias is always smaller than or equal to the Tukey median bias. For continuous random vectors, it can be shown that the rectilinear median bias is also smaller than or equal to the orthant median bias. The following result summarizes these relations; see Section~\ref{appsec:proof-of-prop-relation-med-bias} for a proof.
\begin{proposition}\label{prop:relation-med-biases}
    For any estimator $\widehat{\theta}_n$ and any target $\theta_0$, 
    \[
    \Rmedbias_P(\widehat{\theta}_n; \theta_0) ~\le~ \Tmedbias_P(\widehat{\theta}_n; \theta_0).
    \]
    For any target $\theta_0$ and any estimator $\widehat{\theta}_n\in\mathbb{R}^d$ that is absolutely continuous with respect to the Lebesgue measure on $\mathbb{R}^d$, 
    \[
    \Rmedbias_P(\widehat{\theta}_n; \theta_0) ~\le~ \Omedbias_P(\widehat{\theta}_n; \theta_0).
    \]
\end{proposition}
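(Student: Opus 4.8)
Both inequalities compare a minimum taken over a ``smaller'' sign structure with one taken over a larger structure, so the strategy in each case is to exhibit, for the rectilinear quantity, a witness inside the larger structure that carries at least as large a deficit below the target probability.

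The first inequality is immediate. Every element of $\mathcal{S}_{\texttt{c-sign}}$ has unit Euclidean norm, so $\mathcal{S}_{\texttt{c-sign}}\subseteq\mathbb{S}^{d-1}\subseteq\mathcal{S}_{\texttt{d-sign}}$, and hence $\min_{s\in\mathcal{S}_{\texttt{d-sign}}}\mathbb{P}(s^{\top}(\widehat{\theta}_n-\theta_0)\ge 0)\le\min_{s\in\mathcal{S}_{\texttt{c-sign}}}\mathbb{P}(s^{\top}(\widehat{\theta}_n-\theta_0)\ge 0)$. Subtracting from $1/2$ reverses the inequality, and $x\mapsto x_+$ is nondecreasing, which gives $\Rmedbias_P(\widehat{\theta}_n;\theta_0)\le\Tmedbias_P(\widehat{\theta}_n;\theta_0)$. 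No real obstacle here.

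For the second inequality, write $Y=\widehat{\theta}_n-\theta_0$ and, for each coordinate $j$ and sign $\eta\in\{-1,1\}$, put $p_{j,\eta}=\mathbb{P}(\eta Y_j\ge 0)$ and $q_j=\min_{\eta}p_{j,\eta}$; absolute continuity forces each marginal of $Y$ to be continuous, so $\mathbb{P}(Y_j=0)=0$, $p_{j,1}+p_{j,-1}=1$, and by definition $\Rmedbias_P(\widehat{\theta}_n;\theta_0)=(1/2-\min_j q_j)_+$. The plan is to show that some orthant has $Y$-probability at most $2^{-(d-1)}\min_j q_j$. Granting this, let $m=\min_{A\in\mathcal{O}^d}\mathbb{P}(Y\in A)$ and let $j^\ast$ attain $\min_j q_j$; then, using monotonicity of $x\mapsto x_+$ and the identity $2^{-d}-2^{-(d-1)}q_{j^\ast}=2^{-(d-1)}(1/2-q_{j^\ast})$,
\[
\Omedbias_P(\widehat{\theta}_n;\theta_0)=2^{d-1}\Bigl(2^{-d}-m\Bigr)_+\ \ge\ 2^{d-1}\Bigl(2^{-d}-2^{-(d-1)}q_{j^\ast}\Bigr)_+=\Bigl(\tfrac12-q_{j^\ast}\Bigr)_+=\Rmedbias_P(\widehat{\theta}_n;\theta_0).
\]

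It remains to produce that orthant, and this averaging step is the crux of the argument. Fix $j^\ast$ attaining $\min_j q_j$ and let $\eta^\ast$ attain $\min_\eta p_{j^\ast,\eta}$, so that $\mathbb{P}(\eta^\ast Y_{j^\ast}\ge 0)=q_{j^\ast}$. Among the $2^d$ orthants of $\mathbb{R}^d$, consider the $2^{d-1}$ of them that are contained in the half-space $H=\{y:\eta^\ast y_{j^\ast}\ge 0\}$, i.e.\ those whose defining sign in coordinate $j^\ast$ equals $\eta^\ast$. These closed orthants cover $H$, and any two of them intersect only in a subset of a coordinate hyperplane, which is Lebesgue-null and hence $\mathbb{P}$-null; therefore $\sum_{A\subseteq H}\mathbb{P}(Y\in A)=\mathbb{P}(Y\in H)=q_{j^\ast}$, and since this sum has $2^{d-1}$ terms, at least one orthant $A^\ast$ satisfies $\mathbb{P}(Y\in A^\ast)\le q_{j^\ast}/2^{d-1}$, which is exactly the bound used above. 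I expect the only delicate points to be the null-set bookkeeping in the overlaps of the orthants (this is precisely where continuity of $Y$ is essential, and why the statement is restricted to absolutely continuous estimators) and carrying the factor $2^{d-1}$ correctly through the two positive-part operations; the structural idea of spreading one half-space's mass across its $2^{d-1}$ orthants is otherwise routine.
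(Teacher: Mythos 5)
Your proof is correct and takes essentially the same route as the paper's: both arguments hinge on writing a coordinate half-space as a union of $2^{d-1}$ closed orthants whose pairwise intersections are Lebesgue-null (hence $\mathbb{P}$-null by absolute continuity), so that the half-space probability equals the sum of the orthant probabilities; the paper lower-bounds the half-space probability from the orthant lower bound implied by $\Omedbias_P$, while you run the same identity in the pigeonhole direction to upper-bound the minimum orthant probability, which is logically equivalent. The first inequality via the inclusion $\mathcal{S}_{\texttt{c-sign}}\subseteq\mathcal{S}_{\texttt{d-sign}}$ is likewise exactly how the paper disposes of it in the main text.
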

For the inequality between rectilinear median bias and the orthant median bias, one does not need the full strength of absolute continuity of the distribution of $\widehat{\theta}_n - \theta_0$, it suffices that $\widehat{\theta}_n - \theta_0$ does not have a positive probability of lying on any $s$-dimensional hyperplane for $s < d$.
Interestingly, there is no ordering between the Tukey and the orthant median bias. We show via two simple examples in the bivariate case that no inequality can exist between these median biases, in general. 
\paragraph{Example 1: Zero orthant median bias but non-zero Tukey median bias.} For $d = 2$, take $\theta_0 = 0$, consider $U$ to be uniformly distributed on $\{x\in\mathbb{R}^2:\,\|x\|_2 \le 1\}$, and suppose $X$ be a bivariate random vector independent of $U$ defined by its probability distribution
 \begin{align*}
   \mathbb{P}(X = (1,1)) = 1/4,\quad
   &\mathbb{P}(X = (2,-1)) = 1/4,\\
   \mathbb{P}(X = (-1,2)) = 1/4,\quad
   &\mathbb{P}(X = (-1,-1))= 1/4.
\end{align*}
Let $\widehat{\theta} = X + 0.1U$. Since $U$ is continuously distributed, so is $\widehat{\theta}$.

Because 
\[
\left\{X= (-1,2),\|U\|_2 \leq 1\right\} \subseteq \left\{\widehat{\theta}_1 \in [-1.1,-0.9], \widehat{\theta}_2 \in [1.9,2.1] \right\} \subseteq \left\{\widehat{\theta}_1 \le 0, \widehat{\theta}_2 \ge 0 \right\},
\]
we get that $\mathbb{P}(\widehat{\theta}_1 \le 0, \widehat{\theta}_2 \ge 0) \geq 1/4$. A similar calculation yields the same bound for all quadrants. Hence, $\Omedbias_P(\widehat{\theta}; \theta_0) = 0$.
On the other hand,
\begin{align*}
&\left\{X \in \left\{(1,1),(2,-1),(-1,2)\right\}, \|U\|_2 \leq 1\right\}\\ 
&\quad\subseteq \left\{(1,1)^{\top}\widehat{\theta} = (1,1)^{\top}X + 0.1(1,1)^{\top}U \geq 1 - (0.1) \sqrt{2} > 0 \right\}.
\end{align*}
Thus, $\mathbb{P}((1,1)^{\top}\widehat{\theta} >0) \geq 3/4$ which implies $\mathbb{P}((1,1)^{\top}\theta \leq 0) \leq 1/4$. Hence, $\Tmedbias_P(\widehat{\theta}; \theta_0) \geq 1/4$ (in fact, equality holds).\hfill$\diamond$

\paragraph{Example 2: Zero Tukey median bias but non-zero orthant median bias.}
Let $\nu$ be the probability measure of a standard bivariate normal random vector centered at the origin with identity dispersion. Take $\theta_0= 0$ and consider the following distribution for $\hat{\theta}_n$. Define
\begin{equation}\label{eq:example2}
\mathbb{P}(\hat{\theta}_n \in A) =
\begin{cases}
    3\nu(A)/2,&\mbox{if } A\subseteq\{(x_1, x_2)\in\mathbb{R}^2:\, x_1x_2 \ge 0\}, \\
    \nu(A)/2,&\mbox{if } A\subseteq\{(x_1, x_2)\in\mathbb{R}^2:\, x_1x_2 \le 0\}.
\end{cases}
\end{equation}
It is easy to verify that $\mathbb{P}(\widehat{\theta}_n\in\mathbb{R}^2) = 1$. Using finite additivity of probability measures, we obtain a unique probability measure satisfying~\eqref{eq:example2}. Clearly, (with $\mathcal{O}^2$ representing the collection of orthants in $\mathbb{R}^2$) 
\[
\min_{A\in\mathcal{O}^2}\,\mathbb{P}(\widehat{\theta}_n - \theta_0 \in A) = \frac{1}{4}\min\left\{\frac{3}{2}, \frac{1}{2}\right\} = \frac{1}{8}.
\]
Therefore, $\Omedbias_P(\widehat{\theta}_n; \theta_0) = 2(1/4 - 1/8) = 1/4$. On the other hand, for any $A\subset\mathbb{R}^2$, $\mathbb{P}(\widehat{\theta}_n - \theta_0 \in A) = \mathbb{P}(\theta_0 - \widehat{\theta}_n \in A)$ and $\widehat{\theta}_n - \theta_0$ has an absolutely continuous distribution. Hence, by Proposition~\ref{prop:relation-med-biases}, $\Tmedbias_P(\widehat{\theta}_n; \theta_0) = 0$.\hfill$\diamond$

As mentioned in the introduction, \cite{kuchibhotla2021hulc} proposed the use of the convex hull of independent estimators with controlled Tukey median bias and the rectangular hull of independent estimators with controlled rectilinear median bias. When the Tukey median bias is zero, then for validity the convex hull requires at least $d$ independent estimators which is not feasible in high-dimensional settings. Because the rectangular hull is by definition larger than the convex hull, when rectilinear median bias is controlled, one only needs about $\log(d)$ independent estimators. However, the coverage guarantee is obtained using a union bound and in practice one may end up with a truly conservative confidence region. In the remaining article, we focus on orthant median bias and show that for estimators with controlled orthant median bias, the requirement of $\log(d)$ estimators is strict in the sense that with less than $O(\log(d))$ independent estimators, the rectangular hull may not have valid coverage.
\section{Orthant median bias (OMB)}\label{sec:orthat-med-bias}
In this section, we discuss in detail the definition of orthant median bias and consider the coverage properties of the rectangular hull of independent estimators. Recall that~\eqref{eq:O-med-bias} is only presented for continuous distributions. Mathematically, there is no reason to not use such a definition for all probability distributions. In Section~\ref{subsec:MCH-distributions}, we show the use of~\eqref{eq:O-med-bias} for distributions that place zero mass on canonical coordinate axes (i.e., boundaries of orthants) in constructing confidence regions. In Section~\ref{subsec:general-dist}, we show that a particular statistical intuition breaks when using~\eqref{eq:O-med-bias} as is for all discrete distributions and define the extension of~\eqref{eq:O-med-bias} for general distributions. The statistical intuition we are referring to is the fact that (in the univariate case) the convex hull from any estimator that places non-zero mass at $\theta_0$ has higher coverage (or equivalently, lower miscoverage) than the convex hull from an estimator that places zero mass at $\theta_0$, provided that both estimators place the same mass on $(-\infty, 0)$ and $(0, \infty)$. Similarly, we would expect that miscoverage from hulls would be smaller if the estimator places non-zero mass on any of the canonical coordinate axes. Although an appealing property, this may not be satisfied for orthant median bias as defined in~\eqref{eq:O-med-bias}, as shown in Section~\ref{subsec:general-dist}. 
\subsection{Orthant median bias for MCH distributions}\label{subsec:MCH-distributions}
Let us call a distribution that puts zero mass on all canonical coordinate axes an MCH distribution. More formally, we write $Q\in\MCH$ for a probability measure $Q$ if $Q(\{x\in\mathbb{R}^d:\, \|x\|_0 < d\}) = 0$, where $\|x\|_0 = \sum_{j=1}^d \mathbf{1}\{x_j \neq 0\}$. If a random vector $W$ has a probability measure $Q$, then we write $W\in\MCH$ to denote $Q\in\MCH.$ There is a rather simple characterization of the MCH distributions in terms of the continuity of the marginal distributions. 
\begin{proposition}\label{prop:MCH-characterization}
For any random vector $X\in\mathbb{R}^d$, $X\in\MCH$ if and only if $\mathbb{P}(e_j^{\top}X = 0) = 0$ for all $1\le j\le d$ (or equivalently, the distribution function of $e_j^{\top}X$ is continuous at zero for all $1\le j\le d$).
\end{proposition}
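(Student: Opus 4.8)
The plan is to reduce everything to the elementary set identity
\[
\{x\in\mathbb{R}^d:\, \|x\|_0 < d\} ~=~ \bigcup_{j=1}^d \{x\in\mathbb{R}^d:\, e_j^{\top}x = 0\},
\]
which holds because $\|x\|_0 = \sum_{j=1}^d \mathbf{1}\{x_j\neq 0\}$ is strictly less than $d$ precisely when at least one coordinate of $x$ vanishes. Once this is in hand, the proposition follows from monotonicity and finite subadditivity of the probability measure $Q$ induced by $X$.

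First, for the ``only if'' direction, suppose $X\in\MCH$, i.e.\ $Q(\{\|x\|_0 < d\}) = 0$. For each fixed $j$, the event $\{e_j^{\top}X = 0\}$ is a subset of $\{\|X\|_0 < d\}$, so monotonicity gives $\mathbb{P}(e_j^{\top}X = 0) \le Q(\{\|x\|_0 < d\}) = 0$. Conversely, for the ``if'' direction, suppose $\mathbb{P}(e_j^{\top}X = 0) = 0$ for every $1\le j\le d$. Applying the union bound to the displayed decomposition yields
\[
Q(\{\|x\|_0 < d\}) ~=~ \mathbb{P}\Big(\bigcup_{j=1}^d \{e_j^{\top}X = 0\}\Big) ~\le~ \sum_{j=1}^d \mathbb{P}(e_j^{\top}X = 0) ~=~ 0,
\]
so $X\in\MCH$.

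Finally, for the parenthetical equivalence, I would recall that the distribution function $F_j(t) := \mathbb{P}(e_j^{\top}X \le t)$ is nondecreasing and right-continuous at every point, so it is continuous at $0$ if and only if it is left-continuous at $0$, i.e.\ if and only if $F_j(0) - F_j(0^-) = \mathbb{P}(e_j^{\top}X = 0) = 0$. This shows the two formulations of the condition coincide for each $j$. There is no substantive obstacle in this argument; the only point requiring a moment's care is verifying the set-theoretic identity for $\|\cdot\|_0$, and noting that $d$ is finite so that subadditivity suffices (no countable additivity or measurability subtleties arise).
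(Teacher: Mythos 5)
Your proof is correct and follows essentially the same route as the paper's: both rest on the identity $\{x:\|x\|_0<d\}=\bigcup_{j=1}^d\{e_j^{\top}x=0\}$ and then use monotonicity and finite subadditivity of the probability measure. You merely spell out the two directions and the parenthetical continuity remark in more detail than the paper does.
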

See Appendix~\ref{appsec:proof-of-prop-MCH-characterization} for a proof. Proposition~\ref{prop:MCH-characterization} implies that all absolutely continuous distributions are MCH. As mentioned, we define orthant median bias of $\widehat{\theta}_n$ about $\theta_0$ as in~\eqref{eq:O-med-bias} if $\widehat{\theta}_n - \theta_0\in\MCH$: 
\[
\Omedbias_P(\widehat{\theta}_n; \theta_0) ~:=~ 2^{d-1}\left(\frac{1}{2^d} - \min_{A\in\mathcal{O}^d}\mathbb{P}(\widehat{\theta}_n - \theta_0\in A)\right)_+,\quad\mbox{if }\widehat{\theta}_n - \theta_0\in\MCH.
\] 
Because MCH distributions do not place any mass on the coordinate axes,  $\Omedbias_P(\widehat{\theta}_n; \theta_0) = 0$ for $\widehat{\theta}_n-\theta_0\in\MCH$ if and only if $\mathbb{P}(\widehat{\theta}_n - \theta_0\in A) = 1/2^d$ for all $A\in\mathcal{O}^d$. In words, orthant median bias is zero if and only if all orthants have an equal probability of $1/2^d$.

We are now ready to present one of the main results of this section on the miscoverage of rectangular hull of independent estimators. If $\widehat{\theta}^{(1)}, \ldots, \widehat{\theta}^{(B)}$ are independent estimators in $\mathbb{R}^d$, then define the rectangular hull of these estimators as
\[
\RHull_B 
~:=~ \bigotimes_{j=1}^d \left[\min_{1\le k\le B}\,e_j^{\top}\widehat{\theta}^{(k)},\, \max_{1\le k\le B}\,e_j^{\top}\widehat{\theta}^{(k)}\right].
\]
The following result provides upper and lower bounds for the probability of not covering $\theta_0$, $\mathbb{P}(\theta_0\notin\RHull_B)$, for independent estimators $\widehat{\theta}^{(k)}, 1\le k\le B$.
\begin{theorem}\label{thm:miscoverage-MCH}
    If $\widehat{\theta}^{(k)}, 1\le k\le B$ are independent and identically distributed random vectors in $\mathbb{R}^d$ and $\widehat{\theta}^{(1)} - \theta_0\in\MCH$, then setting $\delta := \Omedbias_P(\widehat{\theta}^{(1)};\theta_0),$ we have
    \[
    L(B, \delta; d) ~\le~ \mathbb{P}\left(\theta_0\notin\RHull_B\right) ~\le~ U(B, \delta; d),
    \]
    where
    \begin{align*}
        L(B, \delta; d) &:= \sum_{\substack{1\leq j \leq d,\\ j\; \mathrm{odd}}}\binom{d}{j}2^j \left(\frac{1}{2^j}\right)^B \\
        &\qquad- \sum_{\substack{1\leq j \leq d,\\ j\; \mathrm{ even}}}
  \binom{d}{j}\left((2^j - 1)\cdot \left(\frac{1}{2^j} - \frac{\delta}{2^{j-1}}\right)^B + \left(\frac{1}{2^j} + (2^j - 1)\cdot\frac{\delta}{2^{j-1}}\right)^B\right),\\
        U(B, \delta; d) &:= \sum_{\substack{1\leq j \leq d,\\ j\; \mathrm{ odd}}}\binom{d}{j}\left((2^j - 1)\cdot \left(\frac{1}{2^j} - \frac{\delta}{2^{j-1}}\right)^B + \left(\frac{1}{2^j} + (2^j - 1)\cdot\frac{\delta}{2^{j-1}}\right)^B\right)\\
        &\qquad- \sum_{\substack{1\leq j \leq d,\\ j\; \mathrm{ even}}} \binom{d}{j}2^j \left(\frac{1}{2^j}\right)^B.
    \end{align*}
\end{theorem}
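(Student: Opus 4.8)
The plan is to reduce the geometric event $\{\theta_0\notin\RHull_B\}$ to a statement about coordinate signs of the errors $Y^{(k)}:=\widehat{\theta}^{(k)}-\theta_0$. By definition of $\RHull_B$, we have $\theta_0\in\RHull_B$ precisely when, for every coordinate $j$, the numbers $e_j^{\top}Y^{(1)},\dots,e_j^{\top}Y^{(B)}$ are neither all strictly positive nor all strictly negative. Since $\widehat{\theta}^{(1)}-\theta_0\in\MCH$, Proposition~\ref{prop:MCH-characterization} gives $\mathbb{P}(e_j^{\top}Y^{(1)}=0)=0$ for each $j$, so each $\sign(e_j^{\top}Y^{(k)})$ lies in $\{-1,+1\}$ almost surely. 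Hence, up to a null event, $\{\theta_0\notin\RHull_B\}=\bigcup_{j=1}^d E_j$, where $E_j$ is the event that $\sign(e_j^{\top}Y^{(1)}),\dots,\sign(e_j^{\top}Y^{(B)})$ all coincide. I would then apply inclusion--exclusion over $j\in\{1,\dots,d\}$, writing $\mathbb{P}(\bigcup_j E_j)=\sum_{\emptyset\neq S\subseteq\{1,\dots,d\}}(-1)^{|S|-1}\mathbb{P}(\bigcap_{j\in S}E_j)$, and grouping by $|S|=j$, which contributes $\binom{d}{j}$ identical terms with sign $(-1)^{j-1}$.

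The next step is to evaluate $\mathbb{P}(\bigcap_{j\in S}E_j)$ exactly. For $\tau\in\{-1,+1\}^{S}$ let $q_{S,\tau}:=\mathbb{P}\big(\sign(e_j^{\top}Y^{(1)})=\tau_j\text{ for all }j\in S\big)$; then $\bigcap_{j\in S}E_j$ is the disjoint union over $\tau$ of the events ``$\sign(e_j^{\top}Y^{(k)})=\tau_j$ for all $k\le B$ and all $j\in S$'', so independence and identical distribution give $\mathbb{P}(\bigcap_{j\in S}E_j)=\sum_{\tau\in\{-1,+1\}^{S}}q_{S,\tau}^{B}$. Two structural facts about the vector $(q_{S,\tau})_\tau$ drive the bounds: first, $\sum_\tau q_{S,\tau}=1$; second, each $q_{S,\tau}$ is the sum of the probabilities of the $2^{d-|S|}$ orthants of $\mathbb{R}^d$ whose sign pattern restricts to $\tau$ on $S$, and each such orthant probability is at least $\min_{A\in\mathcal{O}^d}\mathbb{P}(Y^{(1)}\in A)\ge 2^{-d}-\delta/2^{d-1}$ by definition of $\delta=\Omedbias_P(\widehat{\theta}^{(1)};\theta_0)$. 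Therefore $q_{S,\tau}\ge 2^{d-j}(2^{-d}-\delta/2^{d-1})=2^{-j}-\delta/2^{j-1}=:\ell_j$, and consequently $q_{S,\tau}=1-\sum_{\tau'\neq\tau}q_{S,\tau'}\le 1-(2^j-1)\ell_j=2^{-j}+(2^j-1)\delta/2^{j-1}=:u_j$.

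The crux is then a one-line optimization: bound $g(q):=\sum_\tau q_\tau^{B}$ over the polytope $\Delta_j:=\{q\in\mathbb{R}^{2^j}:\sum_\tau q_\tau=1,\ q_\tau\ge\ell_j\}$ (which contains the realizable vectors). Since $x\mapsto x^{B}$ is convex on $[0,1]$, $g$ is convex, so on the compact set $\Delta_j$ it attains its maximum at an extreme point; because $\delta\le 1/2$ always (as $\delta=2^{d-1}(2^{-d}-\min_A\mathbb{P}(Y^{(1)}\in A))_+\le 1/2$), we have $\ell_j\ge0$, so $\Delta_j$ is a genuine scaled-and-translated standard simplex whose $2^j$ vertices are the permutations of $(u_j,\ell_j,\dots,\ell_j)$ — here one checks the arithmetic identity $\ell_j+(1-2^j\ell_j)=u_j$. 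Hence $\max_{\Delta_j}g=u_j^{B}+(2^j-1)\ell_j^{B}$, while Jensen's inequality (the centroid $q_\tau\equiv2^{-j}$ lies in $[\ell_j,u_j]$) gives $\min_{\Delta_j}g=2^j(2^{-j})^{B}$. Substituting the maximum on the positively signed (odd $j$) terms and the minimum on the negatively signed (even $j$) terms of the inclusion--exclusion sum yields the upper bound $U(B,\delta;d)$, and the reverse assignment yields $L(B,\delta;d)$; the binomial factors $\binom{d}{j}$ and the definitions of $\ell_j,u_j$ match the stated formulas term by term.

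I expect the main obstacle to be the clean justification of the extreme-point computation for $\Delta_j$: one must confirm nondegeneracy via $\ell_j\ge0$ and verify the identity linking $\ell_j$ and $u_j$ so that the vertices genuinely have the claimed single-coordinate-bumped-up form, after which convexity does the rest. A secondary, more routine point is the careful use of the $\MCH$ hypothesis at the very start, so that the coordinate signs are almost surely valued in $\{-1,+1\}$ and the orthant probabilities genuinely sum to one; the remainder of the argument is bookkeeping with the inclusion--exclusion and binomial coefficients. It is also worth noting in passing that for $B=1$ all three quantities collapse to $1$, consistent with $\RHull_1$ being a single point.
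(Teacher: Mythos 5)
Your proposal is correct and follows essentially the same route as the paper's proof: inclusion--exclusion over coordinates, decomposition of each intersection event into disjoint sign-pattern events whose probabilities are powers $q_{S,\tau}^B$ by independence, the lower bound $q_{S,\tau}\ge 2^{-j}-\delta/2^{j-1}$ from the orthant median bias, and the Jensen/extreme-point optimization over the constrained simplex (the paper's Lemma~\ref{lmm:group-spread-ineq}), with the max and min assigned to the odd and even terms exactly as you describe. No gaps.
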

See Section~\ref{appsec:proof-of-thm-miscoverage-MCH} for a proof of Theorem~\ref{thm:miscoverage-MCH}. If $d = 1$, then
\[
L(B, \delta; 1) = \frac{1}{2^{B-1}}\quad\mbox{and}\quad U(B, \delta; 1) = \left(\frac{1}{2} - \delta\right)^B + \left(\frac{1}{2} + \delta\right)^B.
\]
Therefore, for $d = 1$, Theorem~\ref{thm:miscoverage-MCH} reduces to Lemma 1 and Theorem 2 of~\cite{kuchibhotla2021hulc}. In other words, the result matches with the known one for univariate HulC. It is worth stressing here that a non-trivial (i.e., non-zero) lower bound on the miscoverage probability as presented in Theorem~\ref{thm:miscoverage-MCH} is possible only because we are restricting to MCH distributions. For example, if $\widehat{\theta}^{(1)}$ is degenerate at $\theta_0$, then the miscoverage probability is zero. However, such a degenerate distribution is not MCH, and Theorem~\ref{thm:miscoverage-MCH} does not apply.

{\begin{algorithm}[!t]
    \caption{Rectangular Hull Confidence Region}
    \label{alg:confidence-zero-med-bias}
    \SetAlgoLined
    \SetEndCharOfAlgoLine{}
    \KwIn{data $X_1, \ldots, X_n$, coverage probability $1 - \alpha$, and an estimation procedure $\mathcal{A}(\cdot)$ that takes as input observations and returns an estimator.}
    \KwOut{A confidence interval $\widehat{\mathrm{CI}}_{\alpha}$ such that $\mathbb{P}(\theta_0\in\widehat{\mathrm{CI}}_{\alpha}) \ge 1 - \alpha + o(1)$ as $n\to\infty$.}
    Set
    $B_{\alpha,d} := \left\lceil1 - \log_2\left({1 - (1-\alpha)^{1/d}}\right)\right\rceil,$ so that $U(B_{\alpha,d}, 0; d) \le \alpha$\;
    
    Generate a random variable $U$ from the Uniform distribution on $[0,1]$ and set
    \begin{equation}\label{eq:definition-tau}
    \tau_{\alpha,d} := \frac{\alpha - U(B_{\alpha,d}, 0; d)}{U(B_{\alpha,d} - 1, 0; d) - U(B_{\alpha,d}, 0; d)}\quad\mbox{and}\quad B^* := \begin{cases} B_{\alpha,d} - 1, &\mbox{if }U \le \tau_{\alpha,d},\\
    B_{\alpha,d}, &\mbox{if }U > \tau_{\alpha,d}.
    \end{cases} 
    \end{equation}\;
    
    Randomly split the data $X_1, \ldots, X_n$ into $B^*$ disjoint sets $\{\{X_i:i\in S_j\}: 1\le j\le B^*\}$, all of equal sizes. Discard some observations, if needed.\;
    
    Compute estimators $\widehat{\theta}^{(j)} := \mathcal{A}(\{X_i:\,i\in S_j\})$, for $1\le j\le B^*$\;
    
    \Return the rectangular hull confidence region $\widehat{\mathrm{CI}}_{\alpha} ~:=~ \RHull_{B^*}$.
\end{algorithm}}

Some useful properties of $U(\cdot, \cdot; d)$ and $L(\cdot, \cdot; d)$ are noted in the following proposition (proved in Section~\ref{appsec:proof-of-prop-properties-of-L-and-U}).
\begin{proposition}\label{prop:properties-of-L-and-U}
The following statements hold true.
\begin{enumerate}
    \item For any $B, d \ge 1$, $\delta\mapsto U(B, \delta; d)$ is a continuous increasing function.
    \item For any $B, d \ge 1$,
    \[
    L(B, 0; d) = U(B, 0; d) = 1 - \left(1 - 2^{-B+1}\right)^d.
    \]
    \item For any $B, d\ge1$, 
    \[
    \frac{d}{d\delta}U(B, \delta; d)\bigg|_{\delta = 0} = 0,\quad\mbox{and}\quad \frac{d^2}{d\delta^2}U(B,\delta;d) \le 2^dB(B-1),\quad\mbox{for}\quad \delta\in[0, 1/2].
    \]
\end{enumerate}
\end{proposition}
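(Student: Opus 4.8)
My plan is to treat the three claims in sequence, leaning on the explicit closed form for $U(B,\delta;d)$ from Theorem~\ref{thm:miscoverage-MCH}. For claim (1), I would write $U(B,\delta;d)$ as a finite sum (over odd $j$) of terms of the form $\binom{d}{j}g_j(\delta)$ where
\[
g_j(\delta) = (2^j-1)\Bigl(\tfrac{1}{2^j} - \tfrac{\delta}{2^{j-1}}\Bigr)^{\!B} + \Bigl(\tfrac{1}{2^j} + (2^j-1)\tfrac{\delta}{2^{j-1}}\Bigr)^{\!B},
\]
minus a $\delta$-free sum over even $j$. Continuity in $\delta$ is immediate since each $g_j$ is a polynomial in $\delta$. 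For monotonicity, I would differentiate $g_j$ and show $g_j'(\delta) \ge 0$ on the relevant range: the derivative is $B(2^j-1)\cdot 2^{-(j-1)}\bigl[(\tfrac{1}{2^j}+(2^j-1)\tfrac{\delta}{2^{j-1}})^{B-1} - (\tfrac{1}{2^j}-\tfrac{\delta}{2^{j-1}})^{B-1}\bigr]$, and the bracket is nonnegative because the first base dominates the second for $\delta \ge 0$ (and both bases are nonnegative on the admissible range of $\delta$, which is controlled by $\delta \le 1/2$ together with $\Rmedbias \le \Omedbias$ forcing $\delta$ small enough that the probabilities stay in $[0,1]$). Summing over odd $j$ with positive weights $\binom{d}{j}$ preserves the sign, and the even-$j$ sum contributes nothing, so $U$ is increasing.

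For claim (2), I would evaluate at $\delta = 0$: then $g_j(0) = (2^j-1)\cdot 2^{-jB} + 2^{-jB} = 2^j\cdot 2^{-jB} = 2^{j(1-B)}$, which is exactly the coefficient appearing in both the odd-$j$ part of $U$ and (with opposite sign) the even-$j$ part of $U$; likewise $L(B,0;d) = U(B,0;d)$ by the same collapse. Hence
\[
U(B,0;d) = \sum_{j=1}^d (-1)^{j+1}\binom{d}{j}\bigl(2^{-B+1}\bigr)^{j} = 1 - \bigl(1 - 2^{-B+1}\bigr)^{d},
\]
by the binomial theorem (the $j=0$ term $1$ is added and subtracted). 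This is the clean, purely combinatorial step.

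For claim (3), I would compute $g_j'(0)$ and $g_j''(0)$ from the expression above. At $\delta = 0$ the bracket $(\cdot)^{B-1} - (\cdot)^{B-1}$ vanishes, so $g_j'(0) = 0$ for every $j$, giving $\frac{d}{d\delta}U(B,\delta;d)|_{\delta=0} = 0$ term by term. For the second-derivative bound, I would differentiate $g_j$ twice and bound crudely: $g_j''(\delta)$ is a sum of two terms, each of the form $B(B-1)\cdot c^2 \cdot (\text{base})^{B-2}$ where $c = (2^j-1)2^{-(j-1)}$ or $c = 2^{-(j-1)}$, so $c \le 2$, and each base lies in $[0,1]$ for $\delta \in [0,1/2]$ in the admissible regime, hence $(\text{base})^{B-2} \le 1$. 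Thus $g_j''(\delta) \le 2\cdot B(B-1)\cdot 4 = 8B(B-1)$, crudely; summing over odd $j$ against $\binom{d}{j}$ and using $\sum_j \binom{d}{j} \le 2^d$ yields a bound of the stated order (I may need to track the constant a bit more carefully — e.g. noting the two summands in $g_j''$ have coefficients $(2^j-1)^2 2^{-2(j-1)}$ and $2^{-2(j-1)}$ whose sum is at most $2\cdot 2^{-2(j-1)}\cdot 2^{2j} = 8$ uniformly — to land exactly on $2^d B(B-1)$, possibly restricting attention to where bases are genuinely in $[0,1]$). The main obstacle is the bookkeeping in claim (3): getting the constant to be exactly $2^d$ rather than a larger absolute constant times $2^d$ requires being careful that the even-$j$ terms of $U$ are $\delta$-free (so contribute nothing to derivatives) and that the per-$j$ second-derivative bound, after multiplication by $\binom{d}{j}$ and summation, telescopes into $2^d B(B-1)$ — this likely uses that $(2^j-1)^2/2^{2(j-1)} \le 4$ sharply and that only odd $j$ contribute, so the relevant binomial sum is $\sum_{j \text{ odd}}\binom{d}{j} = 2^{d-1}$, which with the factor-of-$4$ coefficient bound gives precisely $2^{d-1}\cdot 4 \cdot \tfrac14 B(B-1)$-type arithmetic landing at $2^d B(B-1)$ after accounting for both summands. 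Everything else is routine polynomial calculus.
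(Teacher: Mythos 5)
Your decomposition of $U(B,\delta;d)$ into the terms $g_j(\delta)$ over odd $j$ (with the even-$j$ part being $\delta$-free), the termwise differentiation for monotonicity, and the binomial-theorem collapse at $\delta=0$ are exactly the paper's argument; parts 1 and 2 of your proposal are correct as written. (One simplification for part 1: for $\delta\in[0,1/2]$ one has $\tfrac{1}{2^j}-\tfrac{\delta}{2^{j-1}}=\tfrac{1}{2^j}(1-2\delta)\ge0$ directly, so no appeal to the relation between the different median biases is needed to keep the bases nonnegative.)

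For part 3, your observation that $g_j'(0)=0$ termwise is right, but the constant-chasing you flag as the ``main obstacle'' does not close, and in fact cannot. The exact coefficients in $g_j''$ are $(2^j-1)2^{-2(j-1)}$ and $(2^j-1)^2 2^{-2(j-1)}$, whose sum is $2^{-2(j-1)}(2^j-1)2^j=4(1-2^{-j})$, not $8$; bounding both bases by $1$ on $[0,1/2]$ then gives $g_j''(\delta)\le 4B(B-1)$ per term and hence $\sum_{j\,\mathrm{odd}}\binom{d}{j}g_j''\le 2^{d+1}B(B-1)$ --- a factor of $2$ above the stated claim, while your closing arithmetic ($2^{d-1}\cdot 4\cdot\tfrac14=2^{d-1}$) lands even further away. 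This is not merely a bookkeeping failure: at $d=3$, $B=2$ the second derivative is constant in $\delta$ and equals $\binom{3}{1}g_1''+\binom{3}{3}g_3''=3\cdot 4+7=19>16=2^3B(B-1)$, so the bound $2^dB(B-1)$ is false as stated. The paper's own proof reaches $2^d$ only by discarding one of the two bracket terms via an inequality that is valid only for $j=1$ (it needs $\tfrac{2^j-1}{2^{j-1}}\le 1$). The honest conclusion from this route is $\frac{d^2}{d\delta^2}U(B,\delta;d)\le 2^{d+1}B(B-1)$, which only changes the constants in Theorem~\ref{thm:asymptotic-validity} and its downstream uses.
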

Based on Theorem~\ref{thm:miscoverage-MCH} and Proposition~\ref{prop:properties-of-L-and-U}, we propose a rectangular hull confidence region (in Algorithm~\ref{alg:confidence-zero-med-bias}) for estimators that have an asymptotic orthant median bias of zero. The following theorem (proved in Section~\ref{appsec:proof-of-asymp-validity}) shows that for MCH estimators that have an asymptotic orthant median bias of zero, the rectangular hull confidence region of Algorithm~\ref{alg:confidence-zero-med-bias} is asymptotically valid.
\begin{theorem}\label{thm:asymptotic-validity}
    Suppose $X_1, \ldots, X_n$ are independent and identically distributed from $P$, and the estimators $\widehat{\theta}^{(j)}, 1\le j\le B^*$ (as in Algorithm~\ref{alg:confidence-zero-med-bias}) satisfy
    \[
    \widehat{\theta}^{(j)} - \theta_0 \in\MCH,\quad\mbox{and}\quad \Omedbias_P(\widehat{\theta}^{(j)}; \theta_0) \le \delta_{n/B_{\alpha,d}},
    \]
    then 
    \[
    \alpha - 2^{d-1}B_{\alpha,d}(B_{\alpha,d} - 1)\delta_{n/B_{\alpha,d}}^2 ~\le~ \mathbb{P}(\theta_0 \notin \widehat{\mathrm{CI}}_{\alpha}) ~\le~ \alpha + 2^{d-1}B_{\alpha,d}(B_{\alpha,d} - 1)\delta_{n/B_{\alpha,d}}^2. 
    \]
\end{theorem}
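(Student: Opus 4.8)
The plan is to condition on the randomized (data-independent) batch count $B^*$, apply Theorem~\ref{thm:miscoverage-MCH} and a second-order Taylor expansion (controlled via Proposition~\ref{prop:properties-of-L-and-U}) inside each conditional world, and then average, exploiting that the weight $\tau_{\alpha,d}$ in Algorithm~\ref{alg:confidence-zero-med-bias} is calibrated precisely so that $\mathbb{E}[U(B^*,0;d)] = \alpha$. Write $m := B_{\alpha,d}$ and $\delta := \delta_{n/B_{\alpha,d}}$, and recall that the uniform variable used to define $B^* \in \{m-1,m\}$ is independent of $X_1,\ldots,X_n$. Conditioning on $\{B^* = B\}$, the estimators $\widehat{\theta}^{(1)},\ldots,\widehat{\theta}^{(B)}$ are i.i.d.\ (the procedure $\mathcal{A}$ is applied to disjoint equal-sized subsamples of i.i.d.\ data), each with $\widehat{\theta}^{(1)} - \theta_0 \in \MCH$ and common orthant median bias $\delta_B := \Omedbias_P(\widehat{\theta}^{(1)};\theta_0)$, which the theorem's hypothesis bounds by $\delta$. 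Theorem~\ref{thm:miscoverage-MCH} then gives, conditionally, $L(B,\delta_B;d) \le \mathbb{P}(\theta_0 \notin \RHull_B \mid B^* = B) \le U(B,\delta_B;d)$.

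The next step is to pin $L(B,\delta_B;d)$ and $U(B,\delta_B;d)$ to their common value at $\delta = 0$. By Proposition~\ref{prop:properties-of-L-and-U}, $L(B,0;d) = U(B,0;d) = 1 - (1-2^{1-B})^d$, the map $\delta \mapsto U(B,\delta;d)$ has vanishing derivative at $\delta = 0$ and second derivative at most $2^d B(B-1)$ on $[0,1/2]$, and it is in fact convex there (each summand has nonnegative second derivative). A second-order Taylor expansion with Lagrange remainder therefore yields $0 \le U(B,\delta_B;d) - U(B,0;d) \le 2^{d-1}B(B-1)\delta_B^2 \le 2^{d-1}m(m-1)\delta^2$, the last inequality using $\delta_B \le \delta$ and $B \le m$. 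By the mirror-image argument for $L(B,\cdot;d)$ — which is concave on $[0,1/2]$, has vanishing derivative at $0$, and whose second derivative is bounded in absolute value by the same quantity (by the computation proving Proposition~\ref{prop:properties-of-L-and-U}(3), now applied to the even-indexed part of the expansion) — one also gets $-2^{d-1}m(m-1)\delta^2 \le L(B,\delta_B;d) - L(B,0;d) \le 0$. Hence, for each $B \in \{m-1,m\}$, $\big|\mathbb{P}(\theta_0 \notin \RHull_B \mid B^* = B) - U(B,0;d)\big| \le 2^{d-1}m(m-1)\delta^2$.

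To finish, average over $B^*$. Since $B^*$ is independent of the data, $\mathbb{P}(\theta_0 \notin \widehat{\mathrm{CI}}_{\alpha}) = \mathbb{E}\big[\mathbb{P}(\theta_0 \notin \RHull_{B^*} \mid B^*)\big]$ lies within $2^{d-1}m(m-1)\delta^2$ of $\mathbb{E}[U(B^*,0;d)]$, so it remains to show $\mathbb{E}[U(B^*,0;d)] = \alpha$. Put $a := U(m-1,0;d)$ and $b := U(m,0;d)$. The map $B \mapsto U(B,0;d) = 1-(1-2^{1-B})^d$ is strictly decreasing, and $m = B_{\alpha,d}$ is by construction the smallest integer with $U(m,0;d) \le \alpha$, so $b \le \alpha \le a$ with $a > b$; therefore $\tau_{\alpha,d} = (\alpha-b)/(a-b) \in [0,1]$ is a legitimate probability and $\mathbb{P}(B^* = m-1) = \tau_{\alpha,d}$, $\mathbb{P}(B^* = m) = 1-\tau_{\alpha,d}$. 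Consequently $\mathbb{E}[U(B^*,0;d)] = \tau_{\alpha,d} a + (1-\tau_{\alpha,d}) b = b + \tau_{\alpha,d}(a-b) = b + (\alpha - b) = \alpha$, and (using $L(B,0;d)=U(B,0;d)$ for the lower bound) the two displayed inequalities of the theorem follow.

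I expect the only genuinely delicate part to be the Taylor control in the second paragraph: one must verify that the derivative bounds of Proposition~\ref{prop:properties-of-L-and-U} hold uniformly over the entire admissible range $\delta \in [0,1/2]$, and that the lower bound has a matching analogue for $L(B,\cdot;d)$ (equivalently, a bound on the second derivative of the even-indexed part of the expansion). The remaining ingredients — the conditioning, the within-batch i.i.d.\ structure, and the stochastic-rounding identity $\mathbb{E}[U(B^*,0;d)] = \alpha$ — are routine bookkeeping; the one point requiring care is that $B^*$ must be generated before and independently of the data, so that conditioning on it leaves intact the i.i.d.\ MCH structure demanded by Theorem~\ref{thm:miscoverage-MCH}.
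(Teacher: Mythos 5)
Your proof is correct and follows essentially the same route as the paper's: condition on the randomized batch count $B^*$, apply Theorem~\ref{thm:miscoverage-MCH}, control $U(B,\delta;d)-U(B,0;d)$ (and its $L$ counterpart) by a second-order Taylor expansion using Proposition~\ref{prop:properties-of-L-and-U}, and invoke the calibration $\mathbb{E}[U(B^*,0;d)]=\alpha$. You actually supply slightly more detail than the paper does on the lower bound (the concavity and second-derivative bound for $L$, which the paper dismisses with ``exactly the same'') and on why $\tau_{\alpha,d}\in[0,1]$.
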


The median bias bound in Theorem~\ref{thm:asymptotic-validity} is indexed with $n/B_{\alpha,d}$ because the estimators $\widehat{\theta}^{(j)}$ are computed using $\lfloor{n/B^*}\rfloor\asymp n/B_{\alpha,d}$ independent observations.
Theorem~\ref{thm:asymptotic-validity} proves that as $\delta_{n/B_{\alpha,d}}$ converges to zero, the rectangular hull confidence region of Algorithm~\ref{alg:confidence-zero-med-bias} has a miscoverage converging to $\alpha$, i.e., $\widehat{\mathrm{CI}}_{\alpha}$ is asymptotically valid with an asymptotic confidence of $1-\alpha$. Convergence of $\delta_{n/B_{\alpha,d}}$ to zero holds true if there exists a sequence $\{r_{n/B^*}\}_{n\ge1}$ such that $r_{n/B^*}(\widehat{\theta}^{(j)} - \theta_0)\overset{d}{\to}W$ for some sign symmetric random vector $W\in\MCH$ (e.g., $N(0, I_d)$). This follows because orthants are continuity sets for any MCH distribution and convergence in distribution implies convergence of probabilities of continuity sets. Therefore, convergence in distribution to an MCH distribution implies convergence of orthant median biases as well. The most interesting aspect of Theorem~\ref{thm:asymptotic-validity} is that the rectangular hull confidence region is second order accurate in that the miscoverage is close to $\alpha$ at the rate of squared median bias. A similar phenomenon is known in the one-dimesional case for HulC~\citep{kuchibhotla2021hulc}. However, their bound is sharper with a multiplicative error, while ours is additive. 

From our characterization of MCH distributions (Proposition~\ref{prop:MCH-characterization}), it follows that the condition that $\widehat{\theta}^{(j)} - \theta_0\in\MCH$ holds true if $\widehat{\theta}^{(j)}$ has an absolutely continuous distribution. This is, however, a restrictive condition to guarantee in practice. In the following subsection, we show that the same upper bound on the miscoverage continuous to hold true for non-MCH estimators as well but with a more complicated definition of orthant median bias. 
\subsection{Orthant median bias for general distributions}\label{subsec:general-dist}
Before proceeding to our definition of orthant median bias for non-MCH distributions, let us present an example to show why definition~\eqref{eq:O-med-bias} is not suitable for non-MCH distributions.
\paragraph{Example: Deficiency of~\eqref{eq:O-med-bias} for non-MCH distributions.} As described in the beginning of Section~\ref{sec:orthat-med-bias}, we would expect two distributions with same median bias to have an ordering in terms of the micoverage probability that the MCH distribution has higher miscoverage than the non-MCH distribution. In this example, we exhibit two distribution, one MCH and the other non-MCH, but the non-MCH distribution has a higher miscoverage probability for the rectangular hull than that of the MCH distribution. Take two probability measures $\mu_{\texttt{M}}$ (an MCH distribution) and $\mu_{\texttt{NM}}$ (a non-MCH distribution) such that
\begin{align*}
    \mu_{\texttt{M}}(\{x_1 \ge 0, x_2 \ge 0\}) ~&=~ \mu_{\texttt{NM}}(\{x_1 \ge 0, x_2 \ge 0\}) ~=~ 0.2,\\
    \mu_{\texttt{M}}(\{x_1 \ge 0, x_2 \le 0\}) ~&=~ \mu_{\texttt{NM}}(\{x_1 \ge 0, x_2 \le 0\}) ~=~ 0.2,\\
    \mu_{\texttt{M}}(\{x_1 \le 0, x_2 \ge 0\}) ~&=~ \mu_{\texttt{NM}}(\{x_1 \le 0, x_2 \ge 0\}) ~=~ 0.2,\\
    \mu_{\texttt{M}}(\{x_1 \le 0, x_2 \le 0\}) &= 0.6,\quad \mu_{\texttt{NM}}(\{x_1 \le 0, x_2 \le 0\}) = 0.4.
\end{align*}
In words, the first three quadrants get a probability mass of $0.2$ and the fourth quadrant (negative $x$ and negative $y$-axis) gets a probability mass of $0.4$ and $0.6$, respectively, for the MCH and non-MCH distributions. Because the minimum mass of closed orthants is $0.2$ for both distributions, the orthant median bias as defined in~\eqref{eq:O-med-bias} is $2(0.25 - 0.2) = 0.1$ for both distributions. For the MCH distribution $\mu_{\texttt{M}},$ the probability content of the open orthants are also the same. We define the non-MCH distribution $\mu_{\texttt{NM}}$ to have non-zero point mass on the positive $y$-axis and negative $x$-axis:
\[
\mu_{\texttt{NM}}(\{x_1 = 0, x_2 > 0\}) = 0.1 = \mu_{\texttt{NM}}(\{x_1 < 0, x_2 = 0\}).
\]
Hence, $\mu_{\texttt{NM}}\notin\MCH$. If we now consider three random vectors from each of these distributions, then 
\begin{equation}\label{eq:miscoverage-prob-MCH-non-MCH}
\begin{split}
\mathbb{P}_{\mu_{\texttt{M}}}(0 \notin \RHull_3) = 0.472,\\
\mathbb{P}_{\mu_{\texttt{NM}}}(0 \notin \RHull_3) = 0.484.
\end{split}
\end{equation}
(See Section~\ref{appsec:proof-of-miscoverage-prob-MCH-non-MCH} for a detailed proof.) \hfill$\diamond$

Equalities~\eqref{eq:miscoverage-prob-MCH-non-MCH} show that any bound we derive with~\eqref{eq:O-med-bias} for MCH distributions need not be valid for non-MCH distributions. For this reason, we develop an alternative definition of orthant median bias for non-MCH distributions. The basic intuition for this is to look for all MCH distributions that are obtained from the given non-MCH distribution by dispersing the probability mass on the edges of an orthant to orthants that share that edge and finding the least orthant median bias among all such MCH distributions. 

For ease of presentation, define
\begin{align*}
U_d &:= \{-1, 0, 1\}^d = \{\eta\in\mathbb{R}^d:\, \eta_j\in\{-1, 0, 1\}\mbox{ for all }j\},\mbox{ and}\\
V_d &:= \{-1, 1\}^d = \{\eta\in\mathbb{R}^d:\, \eta_j\in\{-1, 1\}\mbox{ for all }j\}. 
\end{align*}
Note that every element of $V_d$ represents an orthant and every element in $U_d\setminus V_d$ represents a edge for some orthant. If $A$ is an orthant ($A\in\mathcal{O}^d$), then there exists an $\eta\in V_d$ such that for every $x\in A^\circ$ (the interior of $A$), $\mbox{sgn}(x) = \eta$. By flipping one or more of the bits in $\eta$ to zero, we get to an element in $U_d\setminus V_d$ and represent different edges of the orthant $A$.

Define a partial sign ordering on $U_d$ as follows: for $\eta, \eta'\in U_d$,
\[
\eta\preceq_s \eta' \quad\mbox{if and only if}\quad 
\begin{cases}
\eta_j = \eta_j',\mbox{ for all }j\mbox{ such that }\eta_j\neq 0,\\
\eta_j \le |\eta_j'|,\mbox{ for all }j\mbox{ such that }\eta_j = 0.
\end{cases}
\]
This partial ordering captures the idea of points in the open orthants are greater than points on an edge of that orthant and that all points in open orthants are ``equal.'' The sign partial ordering dictates that a zero sign is less than a non-zero sign. Note that an edge of an orthant in $\mathbb{R}^d$ can be shared by many orthants. Finally, for any probability measure $\mu$ on $\mathbb{R}^d$, define a probability measure on $U_d$ as 
\[
\mu_{\texttt{sgn}}(\eta) := \mu(\{x\in\mathbb{R}^d:\,\mbox{sign}(x) = \eta\}),\quad\mbox{for}\quad \eta\in U_d.
\]
Here $\mbox{sign}(x)$ represents a vector in $\mathbb{R}^d$ with $e_j^{\top}\mbox{sign}(x)$ being $-1, 0$, or $1$ according to if $e_j^{\top}x$ is negative, zero, or positive.

With this set-up, we are now ready to define MCH ordering of probability measures via an {\em elementary} operation. We say that a probability measure $\lambda$ is obtained from another probability measure $\mu$ by an elementary operation if there exists $q \ge 0$ and a pair $(\eta, \eta')$ with $\eta\in U_d\setminus V_d$ and $\eta \preceq_s \eta'$ such that
\begin{equation}\label{eq:elementary-operation-definition}
\begin{split}
\mu_{\texttt{sgn}}(\gamma) ~&=~ \lambda_{\texttt{sgn}}(\gamma) \mbox{ for all }\gamma\in U_d\setminus\{\eta, \eta'\},\quad
\mu_{\texttt{sgn}}(\eta) ~\ge~ q,\\
\lambda_{\texttt{sgn}}(\eta) ~&=~ \mu_{\texttt{sgn}}(\eta) - q,\quad
\lambda_{\texttt{sgn}}(\eta') ~=~ \mu_{\texttt{sgn}}(\eta') + q.
\end{split}
\end{equation}
We write $\lambda = \mathcal{E}(\mu)$ if $\lambda$ is obtained from $\mu$ by an elementary operation $\mathcal{E}(\cdot)$. This elementary operation signifies the dispersion operation that moves some mass from one of the edges to another edge at a higher level (i.e., an edge that has a lesser number of zero's in the sign vector). 
\paragraph{Example: Elementary operation in $\mathbb{R}$.} In $\mathbb{R}$, there are only three possible signs $U_1 = \{-1, 0, 1\}$ and two ``orthants'' $V_d = \{-1, 1\}$. Any elementary operation has to move mass from a set with $\{0\}$ sign to a set with either $\{-1\}$ or $\{1\}$ signs, i.e., any elementary operation disperses mass from $\{0\}$ to either $(-\infty, 0)$ or $(0,\infty)$. Suppose $\mu$ is a probability measure on $\mu$ such that 
\[
\mu((0,\infty)) = \mu_{\texttt{sgn}}(1) = 0.2,\quad \mu((-\infty, 0)) = \mu_{\texttt{sgn}}(-1) = 0.4, \quad\mbox{and}\quad \mu(\{0\}) = \mu_{\texttt{sgn}}(0) = 0.4.
\]
Then an elementary operation on $\mu$ takes a probability mass of at most $0.4$ and disperses it to either $(-\infty, 0)$ or $(0, \infty)$. For example, define a probability measure $\lambda$ such that
\[
\lambda_{\texttt{sgn}}(0) = 0.2,\quad \lambda_{\texttt{sgn}}(-1) = 0.6,\quad\mbox{and}\quad \lambda_{\texttt{sgn}}(1) = 0.2. 
\]
We can write $\lambda = \mathcal{E}_1(\mu)$ where the parameters for $\mathcal{E}_1$ are $q = 0.2, \eta = \{0\},$ and $\eta' = \{-1\}$. We can continue applying such elementary operations as long as the probability mass at $\{0\}$ is non-zero. For example, define a probability measure $\nu$ such that
\[
\nu_{\texttt{sgn}}(0) = 0, \quad \nu_{\texttt{sgn}}(-1) = 0.6,\quad\mbox{and}\quad \nu_{\texttt{sgn}}(1) = 0.4.
\]
We can write $\nu = \mathcal{E}_2(\lambda)$ where the parameters for $\mathcal{E}_2$ are $q = 0.2, \eta = \{0\},$ and $\eta' = \{1\}$. We can move further from $\nu$ by elementary operations because there is no mass left at $0$. Note that $\nu\in\MCH$ and we got $\nu = \mathcal{E}_2\mathcal{E}_1(\mu)$.\hfill$\diamond$ 

We now define MCH ordering. For two probability measures $\mu$ and $\nu$, we say $\mu \preceq_{\texttt{MCH}} \nu$ if there exists a finite sequence of elementary operations $\mathcal{E}_1, \ldots, \mathcal{E}_k$ such that $\nu = \mathcal{E}_k\cdots\mathcal{E}_1(\mu)$. From the definition of the elementary operation, it is clear that the MCH ordering only relates to the induced probability measures on the sign vectors. 
We conjecture that MCH ordering can be equivalently be defined as
\begin{equation}\label{eq:MCH-ordering}
\mu\preceq_{\texttt{MCH}} \nu\quad \mbox{if and only if}\quad
\mu_{\texttt{sgn}}(\eta) \le \nu_{\texttt{sgn}}(\eta')\mbox{ for all }\eta, \eta'\in U_d\mbox{ such that }\eta\preceq_{s}\eta'.
\tag{MCH Order}
\end{equation}

With respect to the ordering $\preceq_{\texttt{MCH}}$, MCH distributions are the greatest distributions because MCH distributions do not place any mass on $\{x:\,\mbox{sign}(x) = \eta\}$ for $\eta\in U_d\setminus V_d$ and hence, all elementary operations must take $q=0$. Now define the orthant median bias for any arbitrary probability measure $\mu$ as 
\begin{equation}\label{eq:O-med-bias-general}
\begin{split}
    \texttt{OMB}(\mu) &:= \inf_{\nu\in\MCH:\,\mu\preceq_{\texttt{MCH}}\nu}\, 2^{d-1}\left(\frac{1}{2^d} - \min_{A\in\mathcal{O}^d}\,\nu(A)\right)_+\\
    &:= \inf_{\nu\in\MCH:\,\mu\preceq_{\texttt{MCH}}\nu}\, 2^{d-1}\left(\frac{1}{2^d} - \min_{\eta\in V_d}\nu_{\texttt{sgn}}(\eta)\right)_+.
\end{split}
\end{equation}
If $\mu\in\MCH$, then $\mu\preceq_{\texttt{MCH}}\nu$ implies $\mu_{\texttt{sgn}}(\eta) = \nu_{\texttt{sgn}}(\eta)$ for all $\eta\in V_d$. This is because every elementary operation from $\mu$ to $\nu$ must use $q = 0$. 
This implies that if $\mu\in\MCH$, then $\texttt{OMB}(\mu) = 2^{d-1}(1/2^d - \min_{A\in\mathcal{O}^d}\mu(A))_+$. This matches our definition~\eqref{eq:O-med-bias} for MCH distributions.
For an estimator $\widehat{\theta}_n$, the median bias about $\theta_0$ is defined as
\begin{equation}\label{eq:Orthant-med-bias-general}
\Omedbias_P(\widehat{\theta}_n; \theta_0) := \texttt{OMB}(\mathcal{L}(\widehat{\theta}_n - \theta_0)),
\end{equation}
where $\mathcal{L}(\widehat{\theta}_n - \theta_0)$ is the probability measure of $\widehat{\theta}_n - \theta_0$. 

With the renewed definition~\eqref{eq:Orthant-med-bias-general} of orthant median bias, the upper bound on the miscoverage probability in Theorem~\ref{thm:miscoverage-MCH} holds true without the need for $\widehat{\theta}^{(1)} - \theta_0\in\MCH$. This is shown formally stated in Theorem~\ref{thm:general-miscoverage-MCH-validity} (proved in Section~\ref{appsec:proof-of-general-miscoverage-MCH-validity}). 
\begin{theorem}\label{thm:general-miscoverage-MCH-validity}
    Suppose $X_1, \ldots, X_n$ are independent and identically distributed from $P$, and the estimators $\widehat{\theta}^{(j)}, 1\le j\le B^*$ (as in Algorithm~\ref{alg:confidence-zero-med-bias}) satisfy
    \[
    \Omedbias_P(\widehat{\theta}^{(j)}; \theta_0) = \texttt{OMB}(\mathcal{L}(\widehat{\theta}^{(j)} - \theta_0)) \le \delta_{n/B_{\alpha,d}},
    \]
    then $\mathbb{P}(\theta_0\notin\widehat{\mathrm{CI}}_{\alpha}) \le \alpha + 2^{d-1}B_{\alpha,d}(B_{\alpha,d} - 1)\delta_{n/B_{\alpha,d}}^2$.
\end{theorem}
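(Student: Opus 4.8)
The plan is to deduce Theorem~\ref{thm:general-miscoverage-MCH-validity} from the MCH miscoverage bound of Theorem~\ref{thm:miscoverage-MCH} by a monotonicity argument along the ordering $\preceq_{\texttt{MCH}}$, and then to reuse, essentially verbatim, the randomization-plus-Taylor bookkeeping from the proof of Theorem~\ref{thm:asymptotic-validity}. Throughout I would translate so that $\theta_0 = 0$, write $Z^{(k)} := \widehat\theta^{(k)} - \theta_0$, let $\mu := \mathcal{L}(Z^{(1)})$ be their common law, and write $\mathbb{P}_\rho(0\notin\RHull_B)$ for the probability that $0$ lies outside the rectangular hull of $B$ i.i.d.\ draws from a probability measure $\rho$. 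The first (purely combinatorial) observation is that, because $\RHull_B$ is a product of \emph{closed} intervals, $0 \notin \RHull_B$ holds if and only if some coordinate $j$ has $\mathrm{sign}(e_j^\top Z^{(k)}) = 1$ for all $k$, or $\mathrm{sign}(e_j^\top Z^{(k)}) = -1$ for all $k$. Hence $\mathbb{P}_\mu(0\notin\RHull_B)$ depends on $(Z^{(1)},\dots,Z^{(B)})$ only through the sign vectors $S^{(k)} := \mathrm{sign}(Z^{(k)}) \in U_d$, and by independence only through $\mu_{\texttt{sgn}}$.

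The key step is the monotonicity claim: if $\lambda = \mathcal{E}(\mu)$ is obtained from $\mu$ by one elementary operation with parameters $(q,\eta,\eta')$ as in~\eqref{eq:elementary-operation-definition}, then $\mathbb{P}_\mu(0\notin\RHull_B) \le \mathbb{P}_\lambda(0\notin\RHull_B)$. I would prove this by an explicit monotone coupling at the level of sign vectors: draw $S^{(1)},\dots,S^{(B)}$ i.i.d.\ from $\mu_{\texttt{sgn}}$, draw independent $\xi^{(1)},\dots,\xi^{(B)}\sim\mathrm{Bernoulli}(q/\mu_{\texttt{sgn}}(\eta))$ (well defined since $\mu_{\texttt{sgn}}(\eta)\ge q$, with $q=0$ when $\mu_{\texttt{sgn}}(\eta)=0$), and set $S'^{(k)} := \eta'$ if $S^{(k)}=\eta$ and $\xi^{(k)}=1$, and $S'^{(k)} := S^{(k)}$ otherwise. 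By~\eqref{eq:elementary-operation-definition} the $S'^{(k)}$ are i.i.d.\ from $\lambda_{\texttt{sgn}}$. Now if coordinate $j$ witnesses $0\notin\RHull_B$ for $(S^{(k)})_k$, say $S^{(k)}_j = 1$ for every $k$ (the case $-1$ is identical), then every $k$ with $S^{(k)}=\eta$ has $\eta_j = 1\neq 0$, so $\eta\preceq_s\eta'$ forces $\eta'_j = \eta_j = 1$, hence $S'^{(k)}_j = 1$; for the remaining $k$, $S'^{(k)}_j = S^{(k)}_j = 1$. Thus the same coordinate $j$ witnesses $0\notin\RHull_B$ for $(S'^{(k)})_k$, so the coupling is monotone and the claim follows. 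Iterating over a finite chain of elementary operations (exactly what $\mu\preceq_{\texttt{MCH}}\nu$ provides), I obtain $\mathbb{P}_\mu(0\notin\RHull_B) \le \mathbb{P}_\nu(0\notin\RHull_B)$ for every $\nu\in\MCH$ with $\mu\preceq_{\texttt{MCH}}\nu$.

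To finish the per-$B$ bound, for any such $\nu\in\MCH$ I would apply Theorem~\ref{thm:miscoverage-MCH} with $\widehat\theta^{(1)}-\theta_0$ having law $\nu$, giving $\mathbb{P}_\nu(0\notin\RHull_B) \le U(B, \texttt{OMB}(\nu); d)$ where $\texttt{OMB}(\nu) = 2^{d-1}(2^{-d} - \min_{\eta\in V_d}\nu_{\texttt{sgn}}(\eta))_+$. Since $\{\nu\in\MCH:\mu\preceq_{\texttt{MCH}}\nu\}$ is nonempty (disperse all mass off the coordinate axes) and $\delta\mapsto U(B,\delta;d)$ is continuous and increasing by Proposition~\ref{prop:properties-of-L-and-U}(1), taking the infimum over $\nu$ and invoking~\eqref{eq:O-med-bias-general} yields
\[
\mathbb{P}_\mu(0\notin\RHull_B) ~\le~ \inf_{\nu\in\MCH:\,\mu\preceq_{\texttt{MCH}}\nu} U(B, \texttt{OMB}(\nu); d) ~=~ U(B, \texttt{OMB}(\mu); d).
\]
Applying this with $\mu = \mathcal{L}(\widehat\theta^{(j)}-\theta_0)$, whose $\texttt{OMB}$ is at most $\delta := \delta_{n/B_{\alpha,d}}$ (and at most $1/2$, since every orthant median bias is), and noting that conditionally on $B^*$ the estimators $\widehat\theta^{(1)},\dots,\widehat\theta^{(B^*)}$ are i.i.d., gives $\mathbb{P}(0\notin\RHull_{B^*}\mid B^*=B) \le U(B,\delta;d)$ for $B\in\{B_{\alpha,d}-1,B_{\alpha,d}\}$. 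From here the argument is the one already carried out for Theorem~\ref{thm:asymptotic-validity}: a second-order Taylor expansion of $U(B,\cdot;d)$ at $0$ with Proposition~\ref{prop:properties-of-L-and-U}(3) gives $U(B,\delta;d) \le U(B,0;d) + 2^{d-1}B(B-1)\delta^2$, and averaging over $B^*$ with weights $\tau_{\alpha,d}, 1-\tau_{\alpha,d}$, using $\tau_{\alpha,d}U(B_{\alpha,d}-1,0;d) + (1-\tau_{\alpha,d})U(B_{\alpha,d},0;d) = \alpha$ from~\eqref{eq:definition-tau} and $B(B-1)\le B_{\alpha,d}(B_{\alpha,d}-1)$, produces $\mathbb{P}(\theta_0\notin\widehat{\mathrm{CI}}_{\alpha}) \le \alpha + 2^{d-1}B_{\alpha,d}(B_{\alpha,d}-1)\delta_{n/B_{\alpha,d}}^2$.

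I expect the monotone coupling in the second paragraph to be the crux: one must recognize that rectangular-hull miscoverage is a monotone function of the coordinate signs, and that an elementary operation only ever turns a zero sign into a nonzero one (or shuffles mass among signs that agree on every nonzero coordinate), so it can never ``rescue'' a coordinate that already witnesses miscoverage. By contrast, the infimum exchange in the third paragraph is soft — it uses only continuity and monotonicity of $U(B,\cdot;d)$ — and the final averaging step is identical to the one in the already-proved Theorem~\ref{thm:asymptotic-validity}.
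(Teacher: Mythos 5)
Your proposal is correct and follows essentially the same route as the paper: reduce the non-MCH case to the MCH case via a monotone sign-level coupling along a chain of elementary operations (this is exactly the paper's Lemma~\ref{lem:miscoverage-elementary-operation}, which you re-derive correctly, including the key point that $\eta\preceq_s\eta'$ preserves any nonzero coordinate sign witnessing miscoverage), then invoke Theorem~\ref{thm:miscoverage-MCH} and the randomization/Taylor bookkeeping of Theorem~\ref{thm:asymptotic-validity}. The only cosmetic difference is that you pass the infimum over $\nu$ inside $U(B,\cdot;d)$ using its continuity and monotonicity, whereas the paper picks a near-optimal $\nu_\varepsilon$ with $\texttt{OMB}(\nu_\varepsilon)\le(1+\varepsilon)\texttt{OMB}(\mu)$ and lets $\varepsilon\downarrow 0$; the two are equivalent.
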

We prove Theorem~\ref{thm:general-miscoverage-MCH-validity} by showing that for a given $\mu\notin\MCH$ and $\nu\in\MCH$ satisfying $\mu\preceq_{\texttt{MCH}} \nu$, there exists a coupling such that the event of miscoverage of zero of rectangular hull of observations from $\mu$ is a subset of that of observations from $\nu$. This implies that the miscoverage probability under $\mu$ is smaller than the miscoverage probability under $\nu$. Given that this forms the core of the proof of Theorem~\ref{thm:general-miscoverage-MCH-validity}, we present this result as a lemma below. Because $\nu\in\MCH$ and Theorems~\ref{thm:miscoverage-MCH},~\ref{thm:asymptotic-validity} already provide upper bounds on miscoverage for MCH distributions, we obtain a bound on miscoverage for general distributions. 
\begin{lemma}\label{lem:miscoverage-elementary-operation}
    Suppose $\mu$ and $\nu$ are two probability measures such that $\nu = \mathcal{E}(\mu)$. Suppose $(X_1, \ldots, X_B)$ and $(Y_1, \ldots, Y_B)$ represents two independent sets of independent and identically distributed random vectors from $\mu$ and $\nu$, respectively. Then
    \[
    \mathbb{P}\left(0 \notin \bigotimes_{j=1}^d\left[\min_{1\le i\le B}e_j^{\top}X_i,\,\max_{1\le i\le B}e_j^{\top}X_i\right]\right) ~\le~ \mathbb{P}\left(0 \notin \bigotimes_{j=1}^d\left[\min_{1\le i\le B}e_j^{\top}Y_i,\,\max_{1\le i\le B}e_j^{\top}Y_i\right]\right).
    \]
    Moreover, the same conclusion continuous to hold true if $\mu \preceq_{\texttt{MCH}} \nu$.
\end{lemma}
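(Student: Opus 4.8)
\emph{Reduction to signs.} The plan is to reduce the miscoverage event to a combinatorial statement about sign vectors and then build an explicit coupling. First I would observe that whether $0$ lies in the rectangular hull is determined by the sign vectors $\mathrm{sign}(X_1),\dots,\mathrm{sign}(X_B)\in U_d$ alone: one has $0\notin\bigotimes_{j=1}^d[\min_i e_j^{\top}X_i,\max_i e_j^{\top}X_i]$ if and only if there is a coordinate $j$ with $e_j^{\top}X_i>0$ for all $i$ or $e_j^{\top}X_i<0$ for all $i$, equivalently a coordinate $j$ on which all $B$ sign vectors take a common value in $\{-1,1\}$; call such a $j$ a \emph{witness}. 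Thus the miscoverage probability depends only on the push-forward $\mu_{\texttt{sgn}}$, and it is a coordinatewise-monotone functional of it.

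\emph{Coupling for one elementary operation.} Given $\nu=\mathcal{E}(\mu)$ with parameters $(q,\eta,\eta')$, where $\eta\in U_d\setminus V_d$ and $\eta\preceq_s\eta'$, I would put $B$ i.i.d.\ draws from $\mu$ and $B$ i.i.d.\ draws from $\nu$ on a common probability space, independently over $i=1,\dots,B$: draw $X_i\sim\mu$; if $\mathrm{sign}(X_i)\neq\eta$, set $t_i:=\mathrm{sign}(X_i)$, while if $\mathrm{sign}(X_i)=\eta$, set $t_i:=\eta'$ with probability $q/\mu_{\texttt{sgn}}(\eta)$ and $t_i:=\eta$ otherwise; then draw $Y_i$ from $\nu$ conditioned on $\mathrm{sign}(Y_i)=t_i$. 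A short computation shows $t_i\sim\nu_{\texttt{sgn}}$, hence $(Y_1,\dots,Y_B)$ is i.i.d.\ $\nu$; the conditional law $\nu(\cdot\mid\mathrm{sign}=t_i)$ is well defined in every branch that is reached with positive probability (in the boundary case $q=\mu_{\texttt{sgn}}(\eta)$ one has $\nu_{\texttt{sgn}}(\eta)=0$, but then the branch $t_i=\eta$ is never taken; and if $q=0$ there is nothing to prove).

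\emph{Domination.} Next I would verify that under this coupling the event $\{0\notin\bigotimes_{j}[\min_i e_j^{\top}X_i,\max_i e_j^{\top}X_i]\}$ is contained in the analogous event for the $Y_i$'s. Suppose $j$ is a witness for the $X_i$'s, say $\mathrm{sign}(X_i)_j=1$ for all $i$ (the value $-1$ is symmetric). Fix $i$: if $\mathrm{sign}(X_i)\neq\eta$ then $t_i=\mathrm{sign}(X_i)$, so $(t_i)_j=1$; if $\mathrm{sign}(X_i)=\eta$ then $\eta_j=\mathrm{sign}(X_i)_j=1\neq 0$, and since $\eta\preceq_s\eta'$ forces $\eta'_j=\eta_j$ at every coordinate where $\eta$ is nonzero, both admissible values of $t_i$ (namely $\eta$ and $\eta'$) have $j$-th entry $1$. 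Hence $(t_i)_j=1$ for all $i$, so $j$ is a witness for the $Y_i$'s and miscoverage for $X$ implies miscoverage for $Y$; taking probabilities gives the single-step inequality. The conceptual content---which I expect to require the most careful phrasing, though it involves no estimate---is that an elementary operation alters only coordinates that were already $0$ in $\eta$, whereas a witness coordinate is never $0$ in any of the sign vectors involved; the case split (witness coordinate versus zero coordinate of $\eta$, flipped versus unflipped index $i$) is the only real bookkeeping.

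\emph{Chaining.} Finally, $\mu\preceq_{\texttt{MCH}}\nu$ means $\nu=\mathcal{E}_k\cdots\mathcal{E}_1(\mu)$ for a finite sequence of elementary operations, so composing the single-step inequality of the previous step along $\mu,\mathcal{E}_1(\mu),\dots,\nu$ yields the ``moreover'' conclusion. The only obstacle anywhere in the argument is the bookkeeping in the coupling and the sign case analysis; there is no analytic difficulty.
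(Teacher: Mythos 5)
Your proposal is correct and follows essentially the same route as the paper's proof: reduce the miscoverage event to the sign vectors, couple the two samples by randomly relabelling the draws with sign $\eta$ as $\eta'$ with probability $q/\mu_{\texttt{sgn}}(\eta)$, observe that a witness coordinate is nonzero in every sign vector involved while $\eta\preceq_s\eta'$ can only alter coordinates where $\eta$ vanishes, and chain over elementary operations. The only cosmetic difference is that you realize actual random vectors $Y_i$ via the conditional laws $\nu(\cdot\mid\mathrm{sign}=t_i)$, whereas the paper works directly with the sign vectors after noting that the miscoverage probability depends only on the induced sign measure.
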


In the remaining part of this section, we study $\texttt{OMB}(\mu)$ and the revised definition~\eqref{eq:Orthant-med-bias-general}. 

An estimator $\widehat{\theta}_n$ that is identically equal to $\theta_0$ (with probability 1) should be unbiased, no matter the notion of unbiasedness. Unfortunately, such an estimator does not belong to $\MCH$. The following proposition shows that an estimator that is identically $\theta_0$ has an orthant median bias of $0$.
\begin{proposition}\label{prop:orthant-median-bias-null-est}
    Suppose $\mathbb{P}(\widehat{\theta}_n = \theta_0) = 1$. Then
    \[
    \Omedbias_P(\widehat{\theta}_n; \theta_0) = 0.
    \]
\end{proposition}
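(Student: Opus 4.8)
Write $\mu := \mathcal{L}(\widehat{\theta}_n - \theta_0)$, which under the hypothesis is the Dirac measure at the origin of $\mathbb{R}^d$. Its induced sign measure therefore satisfies $\mu_{\texttt{sgn}}(\mathbf{0}) = 1$ and $\mu_{\texttt{sgn}}(\eta) = 0$ for every $\eta \in U_d \setminus \{\mathbf{0}\}$, where $\mathbf{0} = (0, \ldots, 0)$ and note $\mathbf{0} \in U_d \setminus V_d$. Since $\Omedbias_P(\widehat{\theta}_n; \theta_0) = \texttt{OMB}(\mu)$ is by~\eqref{eq:O-med-bias-general} an infimum of nonnegative quantities, it suffices to exhibit a single $\nu \in \MCH$ with $\mu \preceq_{\texttt{MCH}} \nu$ and $\min_{\eta \in V_d} \nu_{\texttt{sgn}}(\eta) \ge 1/2^d$; for such a $\nu$ the defining expression equals $2^{d-1}(1/2^d - 1/2^d)_+ = 0$, and nonnegativity of $\texttt{OMB}$ then forces equality.

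The natural candidate is the uniform distribution on the vertices of the cube, $\nu := 2^{-d}\sum_{\eta \in V_d} \delta_{\eta}$: it places no mass on any canonical coordinate axis, hence $\nu \in \MCH$, and $\nu_{\texttt{sgn}}(\eta) = 1/2^d$ for every $\eta \in V_d$. It remains to connect $\mu$ to $\nu$ by a finite chain of elementary operations. Enumerate $V_d = \{\eta^{(1)}, \ldots, \eta^{(2^d)}\}$, set $\mu^{(0)} := \mu$, and for $1 \le i \le 2^d$ let $\mu^{(i)}$ be obtained from $\mu^{(i-1)}$ by the elementary operation with parameters $q = 1/2^d$, source $\eta = \mathbf{0}$, and target $\eta' = \eta^{(i)}$. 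I would verify legitimacy at each step against~\eqref{eq:elementary-operation-definition}: (i) $\mathbf{0} \in U_d \setminus V_d$; (ii) $\mathbf{0} \preceq_s \eta^{(i)}$, which is automatic because $\mathbf{0}$ has all coordinates zero and $0 \le |\eta^{(i)}_j|$ for every $j$; and (iii) the required mass is available, since $\mu^{(i-1)}_{\texttt{sgn}}(\mathbf{0}) = 1 - (i-1)/2^d \ge 1/2^d = q$ for all $i \le 2^d$. Each intermediate measure is realized concretely as the probability measure $2^{-d}\sum_{k \le i} \delta_{\eta^{(k)}} + (1 - i/2^d)\,\delta_{\mathbf{0}}$, and $\mu^{(2^d)} = \nu$.

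Combining the two paragraphs gives $\mu \preceq_{\texttt{MCH}} \nu$ with $\nu \in \MCH$ and $\min_{\eta \in V_d} \nu_{\texttt{sgn}}(\eta) = 1/2^d$, which is exactly the reduction target, so $\Omedbias_P(\widehat{\theta}_n; \theta_0) = \texttt{OMB}(\mu) = 0$. There is no genuine obstacle; the only points that require care are the bookkeeping that every $\mu^{(i)}$ is a bona fide probability measure and that each transition exactly matches the definition of an elementary operation. The key structural fact being exploited is that the origin sign vector $\mathbf{0}$ is dominated by every element of $U_d$ under $\preceq_s$, so mass at the origin can be dispersed directly into all $2^d$ orthants in equal amounts. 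The same argument works for $d = 1$, where $\mathbf{0} = \{0\} \in U_1 \setminus V_1$ and two elementary operations split the point mass equally onto $(-\infty, 0)$ and $(0, \infty)$.
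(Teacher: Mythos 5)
Your argument is correct and is essentially the paper's own proof: both disperse the point mass at the origin into the $2^d$ orthants via $2^d$ elementary operations with $q = 1/2^d$, source $\mathbf{0}$, and targets ranging over $V_d$, yielding an MCH measure with all orthant probabilities equal to $1/2^d$ and hence $\texttt{OMB} = 0$. Your version simply spells out the legitimacy checks (membership of $\mathbf{0}$ in $U_d\setminus V_d$, the ordering $\mathbf{0}\preceq_s\eta^{(i)}$, and mass availability) that the paper leaves implicit.
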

See Section~\ref{appsec:proof-of-orthant-median-bias-null-est} for a proof. The proof essentially applies $2^d$ elementary operations each moving a mass of $1/2^d$ to each of the $2^d$ orthants.

The following proposition shows that in the univariate case, we have an explicit formula for the orthant median bias.
\begin{proposition}\label{prop:orthant-median-bias-1-d}
    If $d = 1$, then for any estimator $\widehat{\theta}_n\in\mathbb{R}$ and $\theta_0\in\mathbb{R}$,
    \[
    \Omedbias_P(\widehat{\theta}_n; \theta_0) = \left(\frac{1}{2} - \min_{\eta\in\{-1, 1\}}\mathbb{P}(\eta(\widehat{\theta}_n - \theta_0) \ge 0)\right)_+.
    \]
\end{proposition}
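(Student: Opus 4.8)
The plan is to reduce the infimum in the general definition~\eqref{eq:O-med-bias-general} to a one‑parameter optimization and then solve it in closed form. Write $p_- := \mathbb{P}(\widehat{\theta}_n < \theta_0)$, $p_0 := \mathbb{P}(\widehat{\theta}_n = \theta_0)$, $p_+ := \mathbb{P}(\widehat{\theta}_n > \theta_0)$, so that $p_- + p_0 + p_+ = 1$, and let $\mu := \mathcal{L}(\widehat{\theta}_n - \theta_0)$, whose induced sign distribution on $U_1 = \{-1,0,1\}$ is $\mu_{\texttt{sgn}}(-1) = p_-$, $\mu_{\texttt{sgn}}(0) = p_0$, $\mu_{\texttt{sgn}}(1) = p_+$. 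Since $d = 1$ gives $V_1 = \{-1,1\}$ and $2^{d-1} = 1$, definition~\eqref{eq:O-med-bias-general} reads $\Omedbias_P(\widehat{\theta}_n;\theta_0) = \inf_\nu\,\big(\tfrac12 - \min\{\nu_{\texttt{sgn}}(-1),\nu_{\texttt{sgn}}(1)\}\big)_+$, the infimum over $\nu\in\MCH$ with $\mu\preceq_{\texttt{MCH}}\nu$.

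First I would pin down the feasible set of $\nu$. In $d = 1$ the only element of $U_d\setminus V_d$ is the zero sign, and $0\preceq_s\eta'$ precisely for $\eta'\in\{-1,1\}$; hence every elementary operation moves some mass $q\in[0,p_0]$ from sign $0$ to one of $\pm 1$ and leaves all other sign masses fixed (this is exactly the one‑dimensional example worked out in the text). Because MCH ordering depends only on the induced sign distributions and an $\MCH$ measure carries zero mass at sign $0$, I claim that $\{\nu\in\MCH:\mu\preceq_{\texttt{MCH}}\nu\}$ consists exactly of the measures with $\nu_{\texttt{sgn}}(-1) = p_- + a$ and $\nu_{\texttt{sgn}}(1) = p_+ + b$ for some $a,b\ge 0$ with $a + b = p_0$: the forward inclusion holds because elementary operations never decrease the mass at $\pm 1$ and preserve total mass, and the reverse holds because two elementary operations (move $a$ from $0$ to $-1$, then the remaining $b = p_0 - a$ from $0$ to $1$) realize any such split. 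This converts the problem into
\[
\Omedbias_P(\widehat{\theta}_n;\theta_0) = \inf\Big\{\big(\tfrac12 - \min\{q_-,q_+\}\big)_+:\ q_-\ge p_-,\ q_+\ge p_+,\ q_- + q_+ = 1\Big\}.
\]

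Next I would maximize $\min\{q_-,q_+\}$ over this polytope. Since $q_- + q_+ = 1$ caps $\min\{q_-,q_+\}$ at $\tfrac12$, the value $\tfrac12$ is attained (by $q_- = q_+ = \tfrac12$) iff $p_-\le\tfrac12$ and $p_+\le\tfrac12$; if instead $p_- > \tfrac12$, then $q_-\ge p_- > \tfrac12 > q_+$, so $\min\{q_-,q_+\} = 1 - q_-$ is largest at $q_- = p_-$ (feasible because $1 - p_- = p_+ + p_0\ge p_+$), giving $1 - p_-$, and symmetrically when $p_+ > \tfrac12$. Hence the maximum equals $\min\{\tfrac12,\,1-p_-,\,1-p_+\}$, so
\[
\Omedbias_P(\widehat{\theta}_n;\theta_0) = \big(\tfrac12 - \min\{\tfrac12,\,1-p_-,\,1-p_+\}\big)_+ = \big(\max\{p_-,p_+\} - \tfrac12\big)_+.
\]
It remains to match this with the claimed formula: $\mathbb{P}(\widehat{\theta}_n - \theta_0\ge 0) = p_+ + p_0$ and $\mathbb{P}(\widehat{\theta}_n - \theta_0\le 0) = p_- + p_0$, so $\min_{\eta\in\{-1,1\}}\mathbb{P}(\eta(\widehat{\theta}_n - \theta_0)\ge 0) = p_0 + \min\{p_-,p_+\}$, and substituting $p_0 = 1 - p_- - p_+$ gives $\big(\tfrac12 - p_0 - \min\{p_-,p_+\}\big)_+ = \big(\max\{p_-,p_+\} - \tfrac12\big)_+$, which is exactly $\Omedbias_P(\widehat{\theta}_n;\theta_0)$.

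I expect the only genuinely nontrivial step to be the characterization of the feasible set $\{\nu\in\MCH:\mu\preceq_{\texttt{MCH}}\nu\}$ — in particular, checking that every split $(a,b)$ with $a+b = p_0$ is reachable by a finite sequence of elementary operations and that no $\MCH$ measure outside this family is reachable; everything after that is routine optimization and algebra. (As a consistency check, taking $\widehat{\theta}_n\equiv\theta_0$ gives $p_- = p_+ = 0$ and both sides vanish, in agreement with Proposition~\ref{prop:orthant-median-bias-null-est}.)
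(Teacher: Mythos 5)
Your proposal is correct and follows essentially the same route as the paper's proof: characterize the sign distributions of the reachable MCH measures as all splits of the point mass $p_0$ between the two half-lines, then optimize over the split; your polytope formulation of the optimization and the paper's direct case analysis in $q_1$ are equivalent. The one step you rightly flag as nontrivial — that these splits are exactly the reachable MCH distributions — is treated at the same level of detail in the paper ("it is easy to verify"), so no gap relative to the original.
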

See Section~\ref{appsec:proof-of-prop-orthant-median-bias-1-d} for a proof. Proposition~\ref{prop:orthant-median-bias-1-d} shows that in the univariate case the orthant median bias defined via MCH distributions matches the definition for MCH distributions and moreover, orthant median bias matches the univariate median bias used in~\cite{kuchibhotla2021hulc,kuchibhotla2023median}.

One of the important aspects for practical performance of the rectangular hull confidence region is to have an asymptotically zero orthant median bias. If $\widehat{\theta}_n - \theta_0\in\MCH$, then by weak convergence, the orthant median bias of the estimators converge to the orthant median bias of the limiting distribution. The following proposition proves that the same result continues to hold true for general distributions. 
\begin{proposition}\label{prop:convergence-in-distribution-OMB}
    Suppose $\widehat{\theta}_n, n\ge1$ is a sequence of estimators in $\mathbb{R}^d$ and there exists a sequence $r_n, n\ge1$ such that $r_n(\widehat{\theta}_n - \theta_0)\overset{d}{\to}W$ for some $W\in\MCH$. Then
    \[
    \lim_{n\to\infty} \Omedbias_P(\widehat{\theta}_n; \theta_0) = \texttt{OMB}(\mathcal{L}(W)) = 2^{d-1}\left(\frac{1}{2^d} - \min_{A\in\mathcal{O}^d}\,\mathbb{P}(W\in A)\right)_+.
    \]
    Moreover, 
    \begin{equation}\label{eq:finite-sample-bound-OMB}
    \left|\Omedbias_P(\widehat{\theta}_n; \theta_0) - \texttt{OMB}(\mathcal{L}(W))\right| ~\le~ 2^{d}\sup_{A\in\mathcal{O}^d}\,|\mathbb{P}(r_n(\widehat{\theta}_n - \theta_0) \in A) - \mathbb{P}(W\in A)|.
    \end{equation}
\end{proposition}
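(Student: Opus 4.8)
The plan is to reduce the statement to one about the induced sign measures and then combine a continuity-set argument with two structural facts about the MCH ordering. Since membership in $\MCH$, the ordering $\preceq_{\texttt{MCH}}$, and the objective in~\eqref{eq:O-med-bias-general} all depend on a probability measure only through its induced measure on $U_d$, the map $\mu\mapsto\texttt{OMB}(\mu)$ factors through $\mu\mapsto\mu_{\texttt{sgn}}$; and since multiplying by $r_n>0$ does not change signs, $\Omedbias_P(\widehat\theta_n;\theta_0)=\texttt{OMB}(\mathcal{L}(\widehat\theta_n-\theta_0))=\texttt{OMB}(\mathcal{L}(Z_n))$ with $Z_n:=r_n(\widehat\theta_n-\theta_0)$ (taking $r_n>0$ without loss of generality). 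Because $W\in\MCH$, the second displayed equality in the proposition is just the already-established identity $\texttt{OMB}(\nu)=2^{d-1}(1/2^d-\min_{A\in\mathcal{O}^d}\nu(A))_+$ valid for $\nu\in\MCH$. So it remains to prove $\texttt{OMB}(\mathcal{L}(Z_n))\to\texttt{OMB}(\mathcal{L}(W))$ and the quantitative bound~\eqref{eq:finite-sample-bound-OMB}.

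I would isolate two facts. (a) An elementary operation weakly decreases the mass of every closed orthant: writing the closed orthant with sign $\zeta\in V_d$ as $A_\zeta=\{x:\mathrm{sign}(x)\preceq_s\zeta\}$, if $\nu=\mathcal{E}(\mu)$ moves mass from $\eta$ to $\eta'$ with $\eta\preceq_s\eta'$, then transitivity of $\preceq_s$ gives $\mathbf{1}\{\eta'\preceq_s\zeta\}\le\mathbf{1}\{\eta\preceq_s\zeta\}$, hence $\nu(A_\zeta)\le\mu(A_\zeta)$; iterating, $\mu\preceq_{\texttt{MCH}}\nu\Rightarrow\nu(A)\le\mu(A)$ for all $A\in\mathcal{O}^d$. (b) Every $\mu$ has an MCH majorant: applying, for each $\gamma\in U_d\setminus V_d$, the elementary operation that moves all of $\mu_{\texttt{sgn}}(\gamma)$ into one fixed $\bar\gamma\in V_d$ with $\gamma\preceq_s\bar\gamma$ (e.g.\ replace the zero entries of $\gamma$ by $+1$) produces $\nu^\star\in\MCH$ with $\mu\preceq_{\texttt{MCH}}\nu^\star$ and $\nu^\star_{\texttt{sgn}}(\eta)\ge\mu_{\texttt{sgn}}(\eta)$ for all $\eta\in V_d$. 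Finally, since $W\in\MCH$ every orthant (open or closed) is a $\mathcal{L}(W)$-continuity set — its boundary lies in $\bigcup_{j}\{x:x_j=0\}$, which is $\mathcal{L}(W)$-null by Proposition~\ref{prop:MCH-characterization} — so $Z_n\overset{d}{\to}W$ gives $\mathbb{P}(Z_n\in A)\to\mathbb{P}(W\in A)$ for each of the finitely many orthants $A$; set $\varepsilon_n:=\sup_{A\in\mathcal{O}^d}|\mathbb{P}(Z_n\in A)-\mathbb{P}(W\in A)|\to0$.

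For convergence: by (a), any admissible $\nu$ satisfies $\min_A\nu(A)\le\min_A\mathbb{P}(Z_n\in A)$, so $\texttt{OMB}(\mathcal{L}(Z_n))\ge 2^{d-1}(1/2^d-\min_A\mathbb{P}(Z_n\in A))_+$, whose limit is $\texttt{OMB}(\mathcal{L}(W))$; by (b), taking $\nu^\star_n$ for $\mathcal{L}(Z_n)$, $\min_{\eta\in V_d}\nu^\star_{n,\texttt{sgn}}(\eta)\ge\min_\eta\mathbb{P}(Z_n\in A_\eta^\circ)\to\min_\eta\mathbb{P}(W\in A_\eta)$, so $\texttt{OMB}(\mathcal{L}(Z_n))\le 2^{d-1}(1/2^d-\min_\eta\nu^\star_{n,\texttt{sgn}}(\eta))_+$ has $\limsup\le\texttt{OMB}(\mathcal{L}(W))$; together these give the limit. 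For~\eqref{eq:finite-sample-bound-OMB} I would make both inequalities quantitative. The lower side is immediate: with $A^\star\in\argmin_A\mathbb{P}(W\in A)$, every admissible $\nu$ has $\min_A\nu(A)\le\mathbb{P}(Z_n\in A^\star)\le\mathbb{P}(W\in A^\star)+\varepsilon_n$, so $\texttt{OMB}(\mathcal{L}(Z_n))\ge\texttt{OMB}(\mathcal{L}(W))-2^{d-1}\varepsilon_n$. For the upper side, let $e_n:=\mathbb{P}(\mathrm{sign}(Z_n)\notin V_d)$ be the mass on the coordinate hyperplanes; counting incidences, $\sum_{\eta\in V_d}\mathbb{P}(Z_n\in A_\eta)=\mathbb{E}[2^{\#\{j:(Z_n)_j=0\}}]\ge(1-e_n)+2e_n=1+e_n$ while $\sum_{\eta}\mathbb{P}(W\in A_\eta)=1$, so $e_n\le2^d\varepsilon_n$; then $\nu^\star_{n,\texttt{sgn}}(\eta)\ge\mathbb{P}(Z_n\in A_\eta^\circ)\ge\mathbb{P}(Z_n\in A_\eta)-e_n\ge\mathbb{P}(W\in A_\eta)-\varepsilon_n-e_n$ for all $\eta$, which with subadditivity of $(\cdot)_+$ bounds $\texttt{OMB}(\mathcal{L}(Z_n))$ above by $\texttt{OMB}(\mathcal{L}(W))+2^{d-1}(\varepsilon_n+e_n)$; the clean constant $2^d$ is then recovered by sharpening this last step, either via the open-orthant version of $\varepsilon_n$ or by routing the hyperplane mass into the orthants so as to approximately reproduce $\mathcal{L}(W)$'s orthant masses instead of dumping each piece into a single fixed orthant.

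The main obstacle is exactly this last point: turning the soft ``$\limsup\le$'' bound into the stated constant requires keeping track of where the hyperplane mass $e_n$ can legally be dispersed — an elementary operation only moves mass upward in $\preceq_s$, so mass on a face reaches only the orthants containing that face — and showing this freedom suffices to keep every orthant's mass within $O(\varepsilon_n)$ of $\mathbb{P}(W\in A)$. Everything else (the reduction to sign measures, facts (a) and (b), and the continuity-set limit) is routine bookkeeping.
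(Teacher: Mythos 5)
Your overall route is the same as the paper's: reduce everything to the induced sign measure, use the monotonicity of closed-orthant masses under elementary operations (your fact (a), which the paper states without proof as $\nu_{n,\varepsilon}(A)\le\mu_n(A)$) for the lower bound, and use the fact that an MCH majorant can only add mass to the open orthants (your fact (b) is a concrete instance of the paper's $\nu_{n,\varepsilon}(A)\ge\mu_n(A^\circ)$) together with a continuity-set argument for the convergence. Your proof of the limit statement and of the lower half of~\eqref{eq:finite-sample-bound-OMB} (with the even better constant $2^{d-1}$) is complete and correct.

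The genuine gap is exactly where you flag it: the upper half of~\eqref{eq:finite-sample-bound-OMB}. Your incidence count controls only the \emph{total} hyperplane mass, $e_n\le 2^d\varepsilon_n$, which yields $\texttt{OMB}(\mathcal{L}(Z_n))\le\texttt{OMB}(\mathcal{L}(W))+2^{d-1}(1+2^d)\varepsilon_n$ — a constant of order $2^{2d-1}$, not the stated $2^d$ — and the sentence ``the clean constant is then recovered by sharpening this last step'' is not a proof. The missing ingredient is a \emph{per-orthant} bound: the paper shows, for each fixed $A\in\mathcal{O}^d$ with sign $\eta_A$, that the mass $\mu_{n,\texttt{sgn}}(\mathcal{B}_A)$ on the faces of that particular orthant is $O(\delta_n)$ (not merely that the total face mass is $O(2^d\delta_n)$). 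It does so by writing $\sum_{A}\mu_n(A^c)$ in two ways via an inclusion--exclusion/incidence identity, using that each $\gamma\in U_d\setminus V_d$ is a face of $2^{d-\|\gamma\|_0}\ge 2$ orthants, and comparing against the same identity for $\mu\in\MCH$; this isolates $\sum_{\gamma\in U_d\setminus V_d}\{2^{2d-\|\gamma\|_0}-2^d\}\mu_{n,\texttt{sgn}}(\gamma)$ on the left and bounds it by the orthant discrepancies. With $\mu_{n,\texttt{sgn}}(\mathcal{B}_A)\le\delta_n$ one gets $\nu_{n,\varepsilon}(A)\ge\mu(A)-2\delta_n$ and hence the constant $2^d$. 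You would need to carry out this finer count (or an equivalent redistribution argument) to close the claim; note also that even the paper's own algebra here appears to drop a factor ($\sum_A|\mu_n(A)-\mu(A)|\le 2^d\delta_n$, not $\delta_n$), so the precise constant deserves care on both sides.
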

See Section~\ref{appsec:proof-of-prop-convergece-in-distribution-OMB} for a proof. The supremum on the right hand side can be bounded by the uniform distance (multivariate Kolmogorov-Smirnov distance) between the distribution functions of $D(\widehat{\theta}_n - \theta_0)$ and $D W$, where $D$ is a diagonal matrix with all diagonal entries in $\{-1, 1\}$. Such bounds can be obtained for several $M$-estimators using either the classical multivariate Berry--Esseen bounds~\citep{bergstrom1945central} or the high-dimensional central limit theorem~\citep{belloni2018high}. It is worth mentioning that Theorem 4.2 of~\cite{rao1962relations} proves that the convergence in distribution ot $r_n(\widehat{\theta}_n - \theta_0)$ to $W$ for some $W\in\MCH$ implies that the supremum on the right hand side of~\eqref{eq:finite-sample-bound-OMB} converges to zero.
\section{Orthant Median Regularity and Valid Inference}\label{sec:OMR-valid-inference}
The results in the previous sections show that if we have estimators that have an asymptotically zero orthant median bias about $\theta_0$, then there exists asymptotically valid confidence regions. If the estimators are asymptotically bounded, then the confidence regions is also asymptotically bounded. In this section, we show that the converse is also true. Namely, we show that if there exists asymptotically valid non-trivial confidence regions, then there exists a non-trivial estimator which has an asymptotic orthant median bias of zero about $\theta_0$. This is an extension of the univariate result of~\cite{kuchibhotla2023median}. 

Before stating the formal result, let us clarify what we mean by non-trivial confidence region and estimator. We call an estimator $\widehat{\theta}_n$ non-trivial if $\mathbb{P}(\cup_{j=1}^d\{|e_j^{\top}\widehat{\theta}_n| 
< \infty\}) > 0$, i.e., at least one of the coordinates is finite with a positive probability. We call a confidence region $\widehat{\mathrm{CI}}$ non-trivial if $\mathbb{P}(\cup_{j=1}^d\{\Lambda_j < \infty\}) > 0$, where $\Lambda_j$ is the Lebesgue measure of $\{e_j^{\top}x:\,x\in\widehat{\mathrm{CI}}\}$.
\begin{theorem}\label{thm:necessity-of-OMB}
    Suppose there exists a $\gamma\in(0, 1)$ such that $\widehat{\mathrm{CI}}_{\gamma}$ is a non-trivial confidence region for $\theta_0$ of miscoverage at most $\gamma$, i.e.,
    \[
    \mathbb{P}(\theta_0\notin\widehat{\mathrm{CI}}_{\gamma}) \le \gamma.
    \]
    Then there exists a (randomized) estimator $\widehat{\theta}$ such that
    \[
    \Omedbias(\widehat{\theta}; \theta_0) ~\le~ \frac{\gamma}{2}.
    \]
\end{theorem}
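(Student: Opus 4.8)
The plan is to construct the required randomized estimator directly from $\widehat{\mathrm{CI}}_{\gamma}$ — mimicking the univariate trick (``coin-flip between the lower and upper endpoint'') behind the equivalence in~\cite{kuchibhotla2023median} — and then verify the orthant-median-bias bound using the MCH-ordering machinery of Section~\ref{subsec:general-dist}.

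\textbf{Step 1 (reduce to a random rectangle).} For each coordinate $j$ let $\widehat L_j := \inf\{e_j^{\top}x:\, x\in\widehat{\mathrm{CI}}_{\gamma}\}$ and $\widehat U_j := \sup\{e_j^{\top}x:\, x\in\widehat{\mathrm{CI}}_{\gamma}\}$, and set $R := \bigotimes_{j=1}^d[\widehat L_j,\widehat U_j]$, the rectangular hull of $\widehat{\mathrm{CI}}_{\gamma}$. Since $\widehat{\mathrm{CI}}_{\gamma}\subseteq R$, the event $C := \{\theta_0\in R\}$ satisfies $\mathbb{P}(C)\ge\mathbb{P}(\theta_0\in\widehat{\mathrm{CI}}_{\gamma})\ge 1-\gamma$, and $C$ is a function of the data alone.

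\textbf{Step 2 (the estimator).} Draw $s=(s_1,\ldots,s_d)$ with i.i.d.\ $\mathrm{Uniform}\{-1,+1\}$ coordinates, independent of the data, and define $\widehat{\theta}$ by $e_j^{\top}\widehat{\theta}:=\widehat U_j$ if $s_j=+1$ and $e_j^{\top}\widehat{\theta}:=\widehat L_j$ if $s_j=-1$; that is, $\widehat{\theta}$ is a uniformly random corner of $R$ (coordinates equal to $\pm\infty$ are allowed, and if $\widehat{\mathrm{CI}}_{\gamma}$ is non-trivial one checks the resulting $\widehat{\theta}$ is non-trivial). The crucial observation is that on $C$ we have $\widehat L_j\le e_j^{\top}\theta_0\le\widehat U_j$ for every $j$, so $e_j^{\top}(\widehat{\theta}-\theta_0)$ either has the same sign as $s_j$ or equals $0$; hence, with $\mathrm{sign}(\cdot)$ as in Section~\ref{subsec:general-dist}, on $C$ we have $\mathrm{sign}(\widehat{\theta}-\theta_0)\preceq_s s$. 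Since $s$ is independent of $C$ and $\mathbb{P}(s=\eta)=2^{-d}$, for every $\eta\in V_d$,
\[
\mathbb{P}\bigl(\mathrm{sign}(\widehat{\theta}-\theta_0)\preceq_s\eta,\ s=\eta\bigr) ~\ge~ \mathbb{P}(s=\eta,\,C) ~=~ 2^{-d}\mathbb{P}(C) ~\ge~ \frac{1-\gamma}{2^d}.
\]

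\textbf{Step 3 (disperse to an MCH measure).} Let $\mu:=\mathcal{L}(\widehat{\theta}-\theta_0)$, so $\mu_{\texttt{sgn}}$ is supported on $U_d$. Build $\nu$ from $\mu$ by a finite chain of elementary operations: for each edge $\zeta\in U_d\setminus V_d$ with $\mu_{\texttt{sgn}}(\zeta)>0$, move all of its mass \emph{directly} onto orthants $\eta\in V_d$ with $\zeta\preceq_s\eta$ — routing a mass of $\mathbb{P}(\mathrm{sign}(\widehat{\theta}-\theta_0)=\zeta,\ s=\eta,\ C)$ to each such $\eta$ (valid since on $C$ the sign is dominated by $s$, so the ``coverage part'' of $\mu_{\texttt{sgn}}(\zeta)$ splits over exactly those $\eta\succeq_s\zeta$), and the remaining (``$C^c$-type'') mass at $\zeta$ to the orthant obtained from $\zeta$ by replacing each zero coordinate with $+1$. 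Because every one of these operations sends mass from an edge straight to an orthant, no edge ever receives mass, the chain is legitimate and finite, $\nu\in\MCH$, and $\mu\preceq_{\texttt{MCH}}\nu$; moreover no orthant ever loses mass, so for every $\eta\in V_d$,
\[
\nu_{\texttt{sgn}}(\eta) ~\ge~ \sum_{\zeta\preceq_s\eta}\mathbb{P}\bigl(\mathrm{sign}(\widehat{\theta}-\theta_0)=\zeta,\ s=\eta,\ C\bigr) ~=~ \mathbb{P}(s=\eta,\,C) ~\ge~ \frac{1-\gamma}{2^d}.
\]
Consequently, by the definition~\eqref{eq:O-med-bias-general} of $\texttt{OMB}$,
\[
\Omedbias(\widehat{\theta};\theta_0) ~=~ \texttt{OMB}(\mu) ~\le~ 2^{d-1}\Bigl(\tfrac{1}{2^d}-\min_{\eta\in V_d}\nu_{\texttt{sgn}}(\eta)\Bigr)_+ ~\le~ 2^{d-1}\cdot\frac{\gamma}{2^d} ~=~ \frac{\gamma}{2}.
\]

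I expect the main obstacle to be the bookkeeping in Step 3: one must carefully argue that the proposed routing is a genuine finite sequence of elementary operations (so $\mu\preceq_{\texttt{MCH}}\nu$ really holds), that the resulting $\nu$ is MCH, and that the portion of $\mu_{\texttt{sgn}}$ arising from $\{s=\eta\}\cap C$ is correctly accumulated at the orthant $\eta$. The independence of $s$ from the data and the sign-domination $\mathrm{sign}(\widehat{\theta}-\theta_0)\preceq_s s$ on $C$ are precisely what make this accounting go through. A secondary, more routine point is measurability of $\widehat L_j,\widehat U_j$ and the treatment of possibly infinite coordinates (and of the event $\{R=\varnothing\}$, which lies in $C^c$ and hence has probability at most $\gamma$), which is also where the non-triviality hypothesis enters if one wants the constructed $\widehat{\theta}$ itself to be non-trivial.
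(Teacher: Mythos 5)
Your construction is correct, and its core — take the rectangular hull $R$ of $\widehat{\mathrm{CI}}_{\gamma}$, let $\widehat{\theta}$ be a uniformly random corner, and show each orthant around $\theta_0$ receives probability at least $(1-\gamma)/2^d$ — is exactly the paper's. Where you genuinely diverge is in handling the possibility that the corner ties with $\theta_0$ in some coordinate (mass on the orthant boundaries). The paper sidesteps this by inflating the rectangle outward with independent $\mathrm{Uniform}(0,1)$ slacks $U_j\Delta_j$ before picking the corner; this forces $\widehat{\theta}-\theta_0\in\MCH$, so the simple formula $2^{d-1}(1/2^d-\min_A\mathbb{P}(\widehat{\theta}-\theta_0\in A))_+$ applies directly and the computation reduces to $\mathbb{P}(\widehat{\theta}-\theta_0\in A)\ge 2^{-d}\mathbb{P}(\theta_0\in\widehat R_{\gamma})$. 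You instead keep the unperturbed corner and work with the general definition~\eqref{eq:O-med-bias-general}, explicitly building a dominating MCH measure $\nu$ by routing each edge's mass to orthants above it in $\preceq_s$; your accounting is sound, since on $C$ one has $\mathrm{sign}(\widehat{\theta}-\theta_0)\preceq_s s$, the events $\{\mathrm{sign}(\widehat{\theta}-\theta_0)=\zeta\}$, $\zeta\preceq_s\eta$, partition $\{s=\eta\}\cap C$, and every move you describe is a legitimate elementary operation from an edge to an orthant. The paper's perturbation is shorter and produces a concrete MCH estimator; your argument exercises the $\preceq_{\texttt{MCH}}$ machinery, avoids enlarging the region, and makes visible that only the induced sign measure matters. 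Both share the same minor loose ends (measurability of $\widehat L_j,\widehat U_j$ and coordinates equal to $\pm\infty$ when the hull is unbounded), which the paper also leaves implicit.
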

See Section~\ref{appsec:proof-of-thm-necessity-of-OMB} for a proof. It is interesting to note that there are no assumptions about the data for Theorem~\ref{thm:necessity-of-OMB}. In particular, we do not need $\widehat{\mathrm{CI}}_{\gamma}$ to be constructed based on independent data. In particular, Theorem~\ref{thm:necessity-of-OMB} applies to dependent data as well. Theorem~\ref{thm:necessity-of-OMB} does not readily prove that an asymptotically valid confidence interval necessitates the existence of an asymptotically zero orthant median bias estimator, unless we can take $\gamma$ to converge to zero with sample size. When the confidence intervals are constructed from independent data, then we can take $\gamma$ converging to zero as $n\to\infty$. The following discussion based on the results of~\cite{kuchibhotla2023median} shows how one can construct a confidence region of arbitrarily high confidence using a confidence region of fixed confidence.

Suppose, for some $\gamma\in[0, 1]$, we have a confidence region procedure that takes $m$ independent observations as input and generates a confidence region $\widehat{\mathrm{CI}}_{m,\gamma}$ such that 
\[
\mathbb{P}(\theta_0\notin\widehat{\mathrm{CI}}_{m,\gamma}) \le \gamma + s_m,
\]
for some sequence $s_m\to0$. Consider the following scheme to construct a new confidence region that has a miscoverage bounded by $\alpha$ for arbitrary $\alpha \le \gamma$. Split the data of $n$ independent observations into $B_{\alpha,\gamma} = \lceil\log_{\gamma}(\alpha)\rceil$ batches of equal sizes $m = \lfloor n/B_{\alpha,\gamma}\rfloor$ (ignore some observations, if needed). Input each batch of $m$ observations through the confidence region procedure to obtain $B_{\alpha,\gamma}$ confidence regions as $\widehat{\mathrm{CI}}_{m,\gamma}^{(k)}, 1\le k\le B_{n,\gamma}$. Set
\[
\widetilde{\mathrm{CI}}_{n,\alpha} = \bigcup_{k=1}^{B_{\alpha,\gamma}} \widehat{\mathrm{CI}}_{n,\gamma}^{(k)}.
\]
Now, observe that
\begin{align*}
\mathbb{P}\left(\theta_0\notin\widetilde{\mathrm{CI}}_{n,\alpha}\right) &= \prod_{k=1}^{B_{\alpha,\gamma}} \mathbb{P}(\theta_0\notin\widehat{\mathrm{CI}}_{m,\gamma}) \le (\gamma + s_m)^{B_{\alpha,\gamma}} = \gamma^{B_{\alpha,\gamma}}(1 + s_m/\gamma)^{B_{\alpha,\gamma}} \le \alpha\exp(B_{\alpha,\gamma}s_{m}/\gamma).
\end{align*}
For any fixed $\alpha\in[0, 1]$, the right hand side converges to $\alpha$ and implies that $\widetilde{\mathrm{CI}}_{n,\alpha}$ is asymptotically valid at level $\alpha$. Following the proof of Theorem 1 of~\cite{kuchibhotla2023median}, we get that there exists a sequence $\alpha_n\to0$ as $n\to0$ such that $\mathbb{P}(\theta_0\notin\widetilde{\mathrm{CI}}_{n,\alpha_n}) \le \alpha_n\to0$. Applying Theorem~\ref{thm:necessity-of-OMB} with $\gamma = \alpha_n$ and the confidence region $\widetilde{\mathrm{CI}}_{n,\alpha_n}$ would imply the existence of an estimator that has an asymptotic orthant median bias of zero. However, we may not know $\alpha_n$ in practice and hence, the resulting confidence interval is not computable only based on the data at hand. Once again following the monotonization trick in the proof of (b)$\Rightarrow$(c) of Theorem 1 of~\cite{kuchibhotla2023median} yields a computable version of $\widetilde{\mathrm{CI}}_{n,\alpha_n}$ and proves the necessity of a an asymptotically zero orthant median bias estimator.  
\section{Discussion}\label{sec:conclusions}
We have provided different notions of multivariate median bias and proposed the rectangular hull confidence regions for general parameters. If the multivariate estimator is suitably median unbiased, then the rectangular hull has valid coverage. In particular, we have focused our study on orthant median bias which measures how far the arrangement of probabilities on orthants differs from that of the standard multivariate normal distributions. We proved that the rectangular hull of $B$ independent estimators with zero orthant median bias has a valid coverage of $1 - \alpha$ if and only if $B \ge 1 - \log_2(1 - (1 - \alpha)^{1/d})$. It is note worthy that this threshold behaves like $\log_2(d/\alpha)$ as $d/\alpha$ diverges. Interestingly, this is precisely the number of estimators used for rectangular hull in~\cite{kuchibhotla2021hulc} under a weaker notion of multivariate median bias, which we referred to as rectilinear median bias. Most importantly, we have extended a result on the necessity of existence of asymptotically median unbiased estimators to the multivariate setting by proving that there exists an asymptotically valid confidence region for $\theta_0$ if and only if there exists an asymptotically orthant median unbiased estimator for $\theta_0$.

There are numerous interesting directions to explore. Firstly, orthant median bias does not converge to zero for many commonly used multivariate estimators. Any estimator that converges (when properly normalized) to a multivariate normal distribution with a non-diagonal covariance matrix has a non-zero orthant median bias, even asymptotically. The two other notions of median bias we discussed, namely, rectilinear median bias and Tukey median bias are both zero for any zero mean normal distribution and are more suitable for commonly used estimators. The ``right'' version of confidence region from independent estimators with zero rectilinear/Tukey median bias is under investigation. In this regard, it may be worth mentioning that~\cite{hoel1961confidence} considered optimal bounds for miscoverage of rectangular hull under controlled rectilinear median bias albeit for $d \le 8$. 
Secondly, many of the bounds presented in this paper are sub-optimal because they do not reduce to some of the optimal bounds we know for $d = 1$. Although of mostly theoretical interest, it would be interesting to obtain sharp bounds for miscoverage of rectangular hull of independent random variables. Finally, there are more interesting notions of multivariate or infinite-dimensional median biases one can define by considering probabilities of cones. We have some preliminary evidence to suggest the necessity of the existence of asymptotically median unbiased estimators for asymptotically valid confidence regions continues to hold to for these general notions of multivariate median bias.  
\bibliography{references}
\bibliographystyle{apalike}

\newpage
\setcounter{section}{0}
\setcounter{equation}{0}
\setcounter{figure}{0}
\renewcommand{\thesection}{S.\arabic{section}}
\renewcommand{\theequation}{E.\arabic{equation}}
\renewcommand{\thefigure}{A.\arabic{figure}}
\renewcommand{\theHsection}{S.\arabic{section}}
\renewcommand{\theHequation}{E.\arabic{equation}}
\renewcommand{\theHfigure}{A.\arabic{figure}}
  \begin{center}
  \Large {\bf Supplement to ``Rectangular Hull Confidence Regions for Multivariate Parameters''}
  \end{center}
       
\begin{abstract}
This supplement contains the proofs of all the main results in the paper, some supporting lemmas, and additional simulations. 
\end{abstract}
\section{Relation between zero median bias and symmetry}\label{appsec:med-bias-symmetry}
\begin{proposition}\label{prop:med-bias-symmetry}
    The following statements are true for any estimator $\widehat{\theta}_n$ and $\theta_0$.
    \begin{enumerate}
        \item $\Rmedbias_P(\widehat{\theta}_n; \theta_0) = 0$ if the marginal distributions of $\widehat{\theta}_n - \theta_0$ are symmetric around zero (i.e., $e_j^{\top}(\widehat{\theta}_n - \theta_0) \overset{d}{=} e_j^{\top}(\theta_0 - \widehat{\theta}_n)$).
        \item $\Tmedbias_P(\widehat{\theta}_n; \theta_0) = 0$ if $\widehat{\theta}_n - \theta_0$ is either angularly symmetric or half-space symmetric or central symmetric. (Angular symmetry means $(\widehat{\theta}_n - \theta_0)/\|\widehat{\theta}_n - \theta_0\| \overset{d}{=} -(\widehat{\theta}_n - \theta_0)/\|\widehat{\theta}_n - \theta_0\|$, half-space symmetry means $\lambda^{\top}(\widehat{\theta}_n - \theta_0) \overset{d}{=} -\lambda^{\top}(\widehat{\theta}_n - \theta_0)$ for all $\lambda\in\mathbb{R}^d$, and finally central symmetry means $\widehat{\theta}_n - \theta_0 \overset{d}{=} \theta_0 - \widehat{\theta}_n$.)
        \item $\Omedbias_P(\widehat{\theta}_n; \theta_0) = 0$ if $\widehat{\theta}_n - \theta_0$ is sign symmetric (i.e., $\widehat{\theta}_n - \theta_0 \overset{d}{=} D(\widehat{\theta}_n - \theta_0)$ for all diagonal matrices $D$ with diagonal entries of unit absolute value).
    \end{enumerate}
\end{proposition}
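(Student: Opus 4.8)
The plan is to treat the three parts in turn, but each via the same one‑dimensional observation: if a real random variable $Y$ satisfies $Y\overset{d}{=}-Y$, then $\prob(Y\ge 0)=\prob(-Y\ge 0)=\prob(Y\le 0)$, and since these sum to $1+\prob(Y=0)\ge 1$, each is at least $1/2$. Writing $Z:=\widehat{\theta}_n-\theta_0$ and recalling that each of $\Rmedbias$, $\Tmedbias$, $\Omedbias$ is the positive part of $(\text{target})-(\text{a minimum of probabilities})$, it suffices in every case to show the relevant minimum attains its target.

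For part (1), marginal symmetry says $e_j^\top Z\overset{d}{=}-e_j^\top Z$ for each $j$, so applying the observation to $Y=\eta\, e_j^\top Z$ gives $\prob(\eta e_j^\top Z\ge 0)\ge 1/2$ for all $\eta\in\{-1,1\}$ and all $j$; hence the minimum in~\eqref{eq:R-med-bias} is $\ge 1/2$. For part (2) I would argue the three cases separately. Half‑space symmetry gives $\lambda^\top Z\overset{d}{=}-\lambda^\top Z$ for every $\lambda$, so the observation with $Y=\lambda^\top Z$ shows the minimum in~\eqref{eq:T-med-bias} is $\ge 1/2$; central symmetry $Z\overset{d}{=}-Z$ is a special case since it implies $\lambda^\top Z\overset{d}{=}-\lambda^\top Z$. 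For angular symmetry I would pass to $W:=Z/\|Z\|_2$ (with $W:=0$ on $\{Z=0\}$): for any fixed $\lambda$ the events $\{\lambda^\top Z\ge 0\}$ and $\{\lambda^\top W\ge 0\}$ coincide, because the two linear forms have the same sign on $\{Z\neq 0\}$ and both vanish on $\{Z=0\}$, and since $W\overset{d}{=}-W$ the observation applied to $\lambda^\top W$ again forces $\prob(\lambda^\top Z\ge 0)\ge 1/2$.

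The substantive part is (3), because for $Z\notin\MCH$ the orthant median bias is defined through the MCH ordering in~\eqref{eq:O-med-bias-general} and the equivalent description~\eqref{eq:MCH-ordering} is only conjectural, so I must exhibit an explicit chain of elementary operations. Let $\mu=\mathcal{L}(Z)$. Sign symmetry gives $\mathrm{sign}(Z)\overset{d}{=}D\,\mathrm{sign}(Z)$ for every $\pm1$ diagonal $D$, hence $\mu_{\texttt{sgn}}(D\eta)=\mu_{\texttt{sgn}}(\eta)$ for all $\eta\in U_d$; equivalently $\mu_{\texttt{sgn}}(\eta)$ depends only on the zero‑set $T$ of $\eta$, say $\mu_{\texttt{sgn}}(\eta)=p_T$, and counting sign vectors gives $\sum_T 2^{d-|T|}p_T=1$. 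The plan is then to route, for each $\eta\in U_d\setminus V_d$ with nonempty zero‑set $T$ and $p_T>0$, its mass in equal shares $p_T/2^{|T|}$ to each of the $2^{|T|}$ orthants $\eta'\succeq_s\eta$, by $2^{|T|}$ successive elementary operations (legal since the residual mass at $\eta$ never drops below $p_T/2^{|T|}$ before the last step, and $U_d\setminus V_d$ is finite, so finitely many operations are used). This produces a measure $\nu$ with $\nu_{\texttt{sgn}}$ supported on $V_d$, so $\nu\in\MCH$ and $\mu\preceq_{\texttt{MCH}}\nu$; and each orthant $\omega\in V_d$ ends up with $p_\emptyset+\sum_{T\neq\emptyset}p_T/2^{|T|}$ (from its original mass plus, for each nonempty $T$, the share routed by the unique edge with zero‑set $T$ below $\omega$), a value independent of $\omega$ whose $2^d$‑fold is $\sum_T 2^{d-|T|}p_T=1$, so $\nu_{\texttt{sgn}}(\omega)=1/2^d$ for every $\omega$. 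Plugging this $\nu$ into~\eqref{eq:O-med-bias-general} gives $\texttt{OMB}(\mu)\le 2^{d-1}(1/2^d-1/2^d)_+=0$, and since $\texttt{OMB}\ge 0$ always, equality holds.

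I expect the bookkeeping in part (3) to be the main obstacle, and the only place any real work is needed: verifying that the prescribed routing is a valid \emph{finite} chain of elementary operations (source always an edge, residual mass always sufficient) and that the sign‑flip invariance of $\mu_{\texttt{sgn}}$ forces every one of the $2^d$ orthants to receive exactly the same total. When $Z\in\MCH$ this all degenerates, since sign symmetry already makes the $2^d$ orthant probabilities equal, hence each equal to $1/2^d$; and when $Z$ is the degenerate estimator at $\theta_0$ the construction reduces to the one behind Proposition~\ref{prop:orthant-median-bias-null-est}.
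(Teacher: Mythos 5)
Your proofs of parts (1) and (2) are correct and essentially identical to the paper's: reduce to the one-dimensional fact that $Y\overset{d}{=}-Y$ forces $\prob(Y\ge 0)\ge 1/2$, applied to $\eta e_j^{\top}Z$ and $\lambda^{\top}Z$ respectively (the paper handles angular symmetry by the same sign-preservation observation you make via $W=Z/\|Z\|_2$).

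Part (3) is where you genuinely diverge, and your route is the more careful one. The paper's proof works directly with the closed-orthant formula~\eqref{eq:O-med-bias}: sign symmetry makes all $2^d$ closed-orthant probabilities equal, the union bound over $\mathbb{R}^d=\cup_{A\in\mathcal{O}^d}A$ gives $\prob(Z\in A)\ge 2^{-d}$, and the bias is declared zero. That argument is complete only when $Z\in\MCH$ (or if one reads $\Omedbias$ as~\eqref{eq:O-med-bias}); for a non-MCH sign-symmetric $Z$ the operative definition is~\eqref{eq:Orthant-med-bias-general}, and equality of the \emph{closed}-orthant probabilities does not by itself produce an MCH majorant $\nu$ with $\min_{\eta\in V_d}\nu_{\texttt{sgn}}(\eta)\ge 2^{-d}$, since boundary mass shared by several orthants can only be routed to one of them. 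Your explicit construction — observing that sign symmetry makes $\mu_{\texttt{sgn}}$ constant on each zero-set class, then splitting each edge's mass $p_T$ equally among its $2^{|T|}$ covering orthants via a finite, legal chain of elementary operations, and checking $2^d\bigl(p_\emptyset+\sum_{T\neq\emptyset}p_T/2^{|T|}\bigr)=\sum_T 2^{d-|T|}p_T=1$ — closes exactly this gap and subsumes the degenerate case of Proposition~\ref{prop:orthant-median-bias-null-est}. The bookkeeping you flag (residual mass never going negative, finiteness of the chain, each orthant receiving mass from exactly one edge per zero-set) all checks out. The cost is length; the benefit is that your part (3) proves the statement under the paper's general definition of orthant median bias, not just for $\MCH$ laws.
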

\begin{proof}
\begin{enumerate}
    \item Because $e_j^{\top}(\widehat{\theta}_n - \theta_0) \overset{d}{=} -e_j^{\top}(\widehat{\theta}_n - \theta_0)$, we have
    \begin{equation}\label{eq:implication-marginal-symmetry}
    \mathbb{P}(e_j^{\top}(\widehat{\theta}_n - \theta_0) \le 0) = \mathbb{P}(e_j^{\top}(\widehat{\theta}_n - \theta_0) \ge 0).
    \end{equation}
    Moreover, we know
    \[
    \mathbb{P}(e_j^{\top}(\widehat{\theta}_n - \theta_0) \le 0) + \mathbb{P}(e_j^{\top}(\widehat{\theta}_n - \theta_0) \ge 0) = 1 + \mathbb{P}(e_j^{\top}(\widehat{\theta}_n - \theta_0) = 0) \ge 1.
    \]
    From~\eqref{eq:implication-marginal-symmetry}, the left hand side above equals $2\mathbb{P}(\eta e_j^{\top}(\widehat{\theta}_n - \theta_0))$ for $\eta\in\{-1, 1\}$. Therefore, 
    \[
    \mathbb{P}(\eta e_j^{\top}(\widehat{\theta}_n - \theta_0)) \ge \frac{1}{2},\quad\mbox{for all}\quad \eta\in\{-1, 1\}, 1\le j \le d.
    \]
    The result follows.
    \item 
    Half-space symmetry implies that $\lambda^{\top}(\widehat{\theta}_n - \theta_0) \overset{d}{=} -\lambda^{\top}(\widehat{\theta}_n - \theta_0)$. Following the proof of part 1, this readily implies
    \[
    \mathbb{P}(\eta\lambda^{\top}(\widehat{\theta}_n - \theta_0) \ge 0) \ge \frac{1}{2},\quad\mbox{for all}\quad \eta\in\{-1, 1\}, \lambda\in\mathbb{R}^d.
    \]
    This implies the result under half-space symmetry. To prove the result under central and angular symmetry, first note that under either of the conditions,
    \[
    \mathbb{P}(\lambda^{\top}(\widehat{\theta}_n - \theta_0) \le 0) = \mathbb{P}(-\lambda^{\top}(\widehat{\theta}_n - \theta_0) \ge 0).
    \]
    (This is not to say that angular or central symmetry implies half-space symmetry because we are only considering the probability around zero.) Now following the proof of part 1 implies the result. 
    \item Sign symmetry implies that
    \begin{equation}\label{eq:implication-sign-symmetry}
    \mathbb{P}(\widehat{\theta}_n - \theta_0 \in A) = \mathbb{P}(\widehat{\theta}_n - \theta_0\in B)\quad\mbox{for all}\quad A, B\in\mathcal{O}^d.
    \end{equation}
    To see this note that for any orthant $A$, there exists a vector $\eta_A\in\{-1, 1\}^s$ such that $A =\{x\in\mathbb{R}^d:\,\eta_{A,j}x_j \ge 0\}$. Hence, we can write
    \begin{align*}
    \mathbb{P}(\widehat{\theta}_n - \theta_0 \in B) &= \mathbb{P}(\eta_{B,j}(\widehat{\theta}_{n,j} - \theta_{0,j}) \ge 0\mbox{ for }1\le j\le d)\\
    &= \mathbb{P}\left(\frac{\eta_{B,j}}{\eta_{A,j}}\eta_{A,j}(\widehat{\theta}_{n,j} - \theta_{0,j}) \ge 0\mbox{ for }1\le j\le d\right)\\
    &= \mathbb{P}(D_{A,B}(\widehat{\theta}_n - \theta_0) \in A),
    \end{align*}
    where $D_{A,B} := \mbox{diag}(\eta_{B,j}/\eta_{A,j}, 1\le j\le d)$. Finally, because $D_{A,B}(\widehat{\theta}_n - \theta_0) \overset{d}{=} \widehat{\theta}_n - \theta_0$, we get~\eqref{eq:implication-sign-symmetry}.
    Now observe that
    \[
    \mathbb{R}^d = \bigcup_{A\in\mathcal{O}^d} A \quad\Rightarrow\quad 1 \le \sum_{A\in\mathcal{O}^d}\mathbb{P}(A) = 2^d\mathbb{P}(A) \quad\Rightarrow\quad \mathbb{P}(A) \ge \frac{1}{2^d},\quad\mbox{for all}\quad A\in\mathcal{O}^d.
    \]
    Hence, the result follows.
    
\end{enumerate}
\end{proof}

\section{Proof of Proposition~\ref{prop:relation-med-biases}}\label{appsec:proof-of-prop-relation-med-bias}
To show that the rectilinear median bias is bounded above by the orthant median bias, it suffices to show that for all $1\le j\le d$, 
\[
\mathbb{P}\left(\eta_je_j^{\top}(\widehat{\theta}_n - \theta_0) \le 0\right) \ge 1/2 - \Omedbias_P(\widehat{\theta}_n; \theta_0) \quad\mbox{for}\quad \eta_j\in\{-1, 1\}.
\]
Because $\widehat{\theta}_n - \theta_0$ has a continuous distribution, it suffices to consider $\eta_j = 1$. Without loss of generality, we fix $j = 1$. Note that for any $x\in\mathbb{R}^d$, 
\[
\{e_1^{\top}x \le 0\} = \bigcup_{s\in \{-1, 1\}^d}\{x_1 \le 0, s_j x_j \le 0,\, 2 \le j\le d\}.
\]
The union on the right hand side has $2^{d-1}$ sets. For any two sets in the union on the right hand side, the intersection is a hyperplane in the $d-1$ dimensional space. By the continuity assumption on $\widehat{\theta}_n - \theta_0$, such intersections have zero probability. Therefore, 
\[
\mathbb{P}\left(e_1^{\top}(\widehat{\theta}_n - \theta_0) \le 0\right) = \sum_{s\in\{-1, 1\}^d} \mathbb{P}\left(\widehat{\theta}_{n,1} - \theta_{0,1} \le 0,\, s_j(\widehat{\theta}_{n,j} - \theta_{0,j}) \le 0,\,2 \le j\le d\right).
\]
Orthant median bias for a continuous distribution means
\[
\mathbb{P}(\widehat{\theta}_n - \theta_0 \in A) \ge \frac{1}{2^d} - \frac{1}{2^{d-1}}\Omedbias_P(\widehat{\theta}_n; \theta_0),\quad\mbox{for all}\quad A\in\mathcal{O}^d.
\]
This implies that
\begin{align*}
\mathbb{P}\left(e_1^{\top}(\widehat{\theta}_n - \theta_0) \le 0\right) &\ge 2^{d-1}\left(\frac{1}{2^d} - \frac{1}{2^{d-1}}\Omedbias_P(\widehat{\theta}_n; \theta_0)\right)\\
&= \frac{1}{2} - \Omedbias_P(\widehat{\theta}_n; \theta_0).
\end{align*}
The same calculation yields the same inequality for $1\le j\le d$ and implies the result.
\section{Proof of Proposition~\ref{prop:MCH-characterization}}\label{appsec:proof-of-prop-MCH-characterization}
Note that
\[
\{x\in\mathbb{R}^d:\,\|x\|_0 < d\} ~=~ \bigcup_{j = 1}^d \{e_j^{\top}x = 0\}.
\]
The probability measure $Q$ belongs to $\MCH$ if and only if the probability of this union is zero. This holds if and only if $Q(\{x:\,e_j^{\top}x = 0\}) = 0$ for all $1\le j\le d$. This completes the proof.
\section{Proof of Theorem~\ref{thm:miscoverage-MCH}}\label{appsec:proof-of-thm-miscoverage-MCH}
We first present a few preliminary lemmas that are needed for our proof of Theorem~\ref{thm:miscoverage-MCH}.


\begin{lemma}\label{lmm:group-spread-ineq}
For any $N>0$, $0 \le \beta \le \frac{1}{N}$, suppose $x_1,\cdots, x_N \ge \beta$ such that $\sum_{i=1}^N x_i = 1$. Then for any $B \ge 1$, $$N \left(\frac{1}{N}\right)^B \le \sum_{i=1}^N x_i^B \le (N-1)\beta^B + \left(1-(N-1)\beta\right)^B$$
\end{lemma}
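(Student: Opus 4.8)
The plan is to derive both inequalities from a single fact: the map $g(t) = t^B$ is convex on $[0,\infty)$ for every $B \ge 1$ (and strictly convex when $B > 1$). Throughout I treat $N$ as a positive integer; the hypothesis $0 \le \beta \le 1/N$ is exactly what makes the feasible set $\Delta_{N,\beta} := \{x \in \mathbb{R}^N : x_i \ge \beta \ \forall i,\ \sum_i x_i = 1\}$ nonempty and guarantees $1 - (N-1)\beta \ge \beta$.

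\emph{Lower bound.} Apply Jensen's inequality (equivalently, the power-mean inequality) to the convex function $g$ with the uniform weights $1/N$: since $\tfrac1N\sum_i x_i = \tfrac1N$, one gets $\tfrac1N\sum_i x_i^B \ge \bigl(\tfrac1N\sum_i x_i\bigr)^B = N^{-B}$, and multiplying by $N$ gives $\sum_i x_i^B \ge N\,(1/N)^B$. This step uses neither the constraint $x_i \ge \beta$ nor $\beta \le 1/N$, and reduces to an equality when $B = 1$.

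\emph{Upper bound.} I would show that $\sum_i x_i^B$ is maximized over $\Delta_{N,\beta}$ at the ``most spread out'' point, where $N-1$ coordinates equal $\beta$ and one equals $1-(N-1)\beta$, by a mass-transfer argument. Order the coordinates so $x_1 \ge x_2 \ge \dots \ge x_N$; if $x_2 > \beta$, replace the pair $(x_1,x_2)$ by $(x_1 + x_2 - \beta,\ \beta)$, which preserves $\sum_i x_i = 1$ and all the lower bounds $x_i \ge \beta$. Writing $h := x_2 - \beta > 0$, the resulting change in the objective equals $\bigl(g(x_1+h) - g(x_1)\bigr) - \bigl(g(\beta+h) - g(\beta)\bigr)$, which is $\ge 0$ because increments of a convex function are nondecreasing and $x_1 \ge \beta$; hence the objective does not decrease. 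Since $x_1 + x_2 - \beta > x_1 \ge \beta$, this step strictly decreases the number of coordinates exceeding $\beta$, so after at most $N-1$ such steps we reach the claimed extreme configuration, at which $\sum_i x_i^B = (N-1)\beta^B + (1-(N-1)\beta)^B$. (An alternative phrasing: a convex function attains its maximum over a polytope at a vertex, and the vertices of $\Delta_{N,\beta}$ are precisely the permutations of $(1-(N-1)\beta,\beta,\dots,\beta)$.)

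I do not anticipate a real obstacle here; the argument requires no delicate estimation. The only care needed is in the boundary/degenerate cases — $B = 1$ (both inequalities become equalities), $N = 1$ (trivial), $\beta = 0$ (the upper bound collapses to $1$), and $\beta = 1/N$ (the only feasible point is uniform, so all three quantities agree) — and in checking that the exchange step genuinely reduces the count of coordinates strictly above $\beta$ so that the induction terminates. The mass-transfer formulation is preferable precisely because it is visibly valid for all $B \ge 1$ and does not rely on strict convexity.
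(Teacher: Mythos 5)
Your proof is correct. The lower bound is exactly the paper's argument (Jensen applied to $z\mapsto z^B$ with uniform weights), so there is nothing to compare there. For the upper bound the paper argues abstractly: $\sum_i x_i^B$ is convex on the polytope $\mathcal{S}=\{x\in[\beta,1]^N:\sum_i x_i=1\}$, a convex function attains its maximum at an extreme point, and $\mathcal{S}$ is asserted to be the convex hull of the permutations of $(1-(N-1)\beta,\beta,\dots,\beta)$ — an assertion the paper does not actually verify. Your primary argument replaces this with an explicit mass-transfer step: move $x_2-\beta$ from the second-largest coordinate to the largest one and use the fact that increments $g(a+h)-g(a)$ of a convex function are nondecreasing in $a$. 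This is more elementary and self-contained, since it never requires identifying the vertices of the truncated simplex, and your termination bookkeeping (the count of coordinates strictly exceeding $\beta$ drops by one per step) is sound; your parenthetical alternative is precisely the paper's route. Both arguments reach the same extremal configuration and the same value $(N-1)\beta^B+(1-(N-1)\beta)^B$, and your handling of the degenerate cases ($B=1$, $N=1$, $\beta=0$, $\beta=1/N$) is consistent with the sharpness remarks in the paper.
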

\begin{proof}By Jensen's inequality on the convex function $g(z) = z^B$, we have
\begin{align*}
    &\frac{1}{N}\sum_{i=1}^N{x_i^B} \geq \left[\frac{1}{N}\sum_{i=1}^N x_i\right]^B\\
    &\Rightarrow \sum_{i=1}^N{x_i^B} \geq N \left(\frac{1}{N}\right)^B \;\;\;\text{ since } \sum_{i=1}^N x_i = 1.
\end{align*} 
thus establishing the first inequality. This is sharp because $(1/N, \ldots, 1/N)\in\mathcal{S} := \{(x_1, \ldots, x_N)\in[\beta, 1]^N:\, \sum_{i=1}^N x_i = 1\}$. 

For the second inequality, note that $(x_1, \ldots, x_N)\mapsto \sum_{i=1}^N x_i^B$ is a convex function and $\mathcal{S}$ is also a convex set. Therefore, the maximum of $\sum_{i=1}^N x_i^B$ on $\mathcal{S}$ is attained on the boundary of $\mathcal{S}$. Because $\mathcal{S}$ is the convex hull of 
\[
\mathcal{S}' := \{(x_1, \ldots, x_N)\in[\beta, 1]^N:\, x_j = 1 - (N-1)\beta,\mbox{ for some }j\mbox{ and }x_{j'} = \beta\mbox{ for all }j'\neq j\},
\]
it follows that the maximum value is attained at one of these points. The function value is the same as $(N-1)\beta^B + (1 - (N-1)\beta)^B$ at any of the points in $\mathcal{S}'$, we get the second inequality. Evidently, it is also sharp. 

\end{proof}
\begin{proof}[Proof of Theorem~\ref{thm:miscoverage-MCH}]
Note that
\begin{align*}
\{\theta_0\notin\RHull_B\} &= \bigcup_{j=1}^d \left\{e_j^{\top}\theta_{0}\notin\left[\min_{1\le k\le B}e_j^{\top}\widehat{\theta}^{(k)},\,\max_{1\le k\le B}e_j^{\top}\widehat{\theta}^{(k)}\right]\right\}.
\end{align*}
By the inclusion-exclusion principle, we obtain
\begin{align*}
    \mathbb{P}(\theta_0\notin\RHull_B) &= \sum_{\ell = 1}^d (-1)^{\ell - 1}\sum_{I\subseteq[d], |I| = \ell} \mathbb{P}\left(\bigcap_{j\in I}\left\{e_j^{\top}\theta_{0}\notin\left[\min_{1\le k\le B}e_j^{\top}\widehat{\theta}^{(k)},\,\max_{1\le k\le B}e_j^{\top}\widehat{\theta}^{(k)}\right]\right\}\right).
\end{align*}
The intersection event is that for all $j\in I,$ either $e_j^{\top}\theta_0$ is less than all of $e_j^{\top}\widehat{\theta}^{(k)}, 1\le k\le B$ or is larger than all of $e_j^{\top}\widehat{\theta}^{(k)}, 1\le k\le B$. This implies that for any $I\subseteq[n]$ with $|I| = \ell$,
\begin{align*}
    &\bigcap_{j\in I}\left\{e_j^{\top}\theta_{0}\notin\left[\min_{1\le k\le B}e_j^{\top}\widehat{\theta}^{(k)},\,\max_{1\le k\le B}e_j^{\top}\widehat{\theta}^{(k)}\right]\right\}\\
    &= \bigcup_{\eta\in\{-1, 1\}^{\ell}} \left(\bigcap_{j\in I, 1\le k\le B}\left\{\eta_j(e_j^{\top}\widehat{\theta}^{(k)} - e_j^{\top}\theta_0) < 0\right\}\right).
\end{align*}
This is a disjoint union, and hence, 
\begin{equation}\label{eq:expression-rectangular-hull-coverage}
\begin{split}
    \mathbb{P}(\theta_0\notin\RHull_B) &= \sum_{\ell = 1}^d (-1)^{\ell - 1}\sum_{\substack{I\subseteq[d], |I| = \ell,\\\eta\in\{-1, 1\}^I}} \mathbb{P}\left(\bigcap_{j\in I, 1\le k\le B}\left\{\eta_j(e_j^{\top}\widehat{\theta}^{(k)} - e_j^{\top}\theta_0) < 0\right\}\right)\\
    &= \sum_{\ell = 1}^d (-1)^{\ell - 1}\sum_{\substack{I\subseteq[d], |I| = \ell,\\\eta\in\{-1, 1\}^I}} \prod_{k=1}^B \mathbb{P}\left(\bigcap_{j\in I}\left\{\eta_j(e_j^{\top}\widehat{\theta}^{(k)} - e_j^{\top}\theta_0) < 0\right\}\right)\\
    &= \sum_{\ell = 1}^d (-1)^{\ell - 1}\sum_{\substack{I\subseteq[d], |I| = \ell,\\\eta\in\{-1, 1\}^I}} \left(\mathbb{P}\left(\bigcap_{j\in I}\left\{\eta_j(e_j^{\top}\widehat{\theta}^{(1)} - e_j^{\top}\theta_0) < 0\right\}\right)\right)^B\\
    &= \sum_{\ell = 1}^d (-1)^{\ell - 1}\sum_{I\subseteq[d], |I| = \ell}\left[\sum_{\eta\in\{-1, 1\}^I} \left(\mathbb{P}\left(\mathcal{C}_{I,\eta}\right)\right)^B\right].
    \end{split}
\end{equation} 
where 
\[
\mathcal{C}_{I,\eta} = \bigcap_{j\in I}\left\{\eta_j(e_j^{\top}\widehat{\theta}^{(1)} - e_j^{\top}\theta_0) < 0\right\} = \left\{\widehat{\theta}^{(1)}-\theta_0 \in \bigcap_{j\in I}\{x \in \mathbb{R}^d: \eta_j e_j^{\top}x < 0\} \right\}.
\]

To prove the result, it now suffices to upper and lower bound the probabilities on the right hand side. Following a similar argument to the one used in the proof of Proposition~\ref{prop:relation-med-biases}, for any $I\subseteq[d], |I| = \ell$, we can write $\cap_{j\in I}\{x: \eta_j e_j^{\top}x \le 0\}$ as a union of $2^{d-\ell}$ many orthants which have zero-probability intersections. This implies that
\begin{align*}
\mathbb{P}\left( \mathcal{C}_{I,\eta} \right)= \mathbb{P}\left(\widehat{\theta}^{(1)}-\theta_0 \in \bigcap_{j\in I}\{x: \eta_j e_j^{\top}x \le 0\}\right) &\ge 2^{d-\ell}\min_{A\in\mathcal{O}^d}\mathbb{P}\left(\widehat{\theta}^{(1)} - \theta_0\in A\right)\\ 
&\ge 2^{d-\ell}\left(\frac{1}{2^d} - \frac{\delta}{2^{d-1}}\right) = \frac{1}{2^{\ell}} - \frac{\delta}{2^{\ell - 1}}.
\end{align*}
Now, notice that 
\[
    \bigcup_{\eta \in \{-1,1\}^I} \mathcal{C}_{I,\eta} = \left\{\widehat{\theta}^{(1)}-\theta_0 \in \bigcup_{\eta \in \{-1,1\}^I} \bigcap_{j\in I}\{x: \eta_j e_j^{\top}x \le 0\} \right\} = \left\{\widehat{\theta}^{(1)}-\theta_0 \in \mathbb{R}^d \right\},
\]
which is a probability one event, and because  $\widehat{\theta}^{(1)}$ does not equal to $\theta_0$ almost surely for any subset of coordinates due the condition of MCH, events $\mathcal{C}_{I,\eta}$ intersect with zero probability, and we get that $\sum_{\eta\in\{-1, 1\}^I} \mathbb{P}\left(\mathcal{C}_{I,\eta}\right) = 1.$

Thus with $N = 2^{\ell}$, $\beta = {(1-2\delta)}/{2^{\ell}}$, and for all $\eta \in \{-1,1\}^I$, $x_{\eta} = \mathbb{P}\left(\mathcal{C}_{I,\eta}\right)$, we apply Lemma~\ref{lmm:group-spread-ineq} to get 
\begin{align*}
   2^{\ell} \left(\frac{1}{2^\ell}\right)^B &\le \sum_{\eta\in\{-1, 1\}^I} \left(\mathbb{P}\left(\mathcal{C}_{I,\eta}\right)\right)^B \le (2^{\ell} - 1) \left(\frac{1}{2^\ell} - \frac{\delta}{2^{\ell-1}}\right)^B + \left(\frac{1}{2^\ell} + (2^\ell - 1)\cdot\frac{\delta}{2^{\ell-1}}\right)^B\\
   \Rightarrow \binom{d}{\ell} 2^{\ell} \left(\frac{1}{2^\ell}\right)^B &\le \sum_{I\subseteq[d], |I| = \ell}\sum_{\eta\in\{-1, 1\}^I} \left(\mathbb{P}\left(\mathcal{C}_{I,\eta}\right)\right)^B\\ 
   &\le \binom{d}{\ell} (2^{\ell} - 1) \left(\frac{1}{2^\ell} - \frac{\delta}{2^{\ell-1}}\right)^B + \left(\frac{1}{2^\ell} + (2^\ell - 1)\cdot\frac{\delta}{2^{\ell-1}}\right)^B,
\end{align*} 
which when combined with~\eqref{eq:expression-rectangular-hull-coverage} yields the desired result.
\end{proof}

\section{Proof of Proposition~\ref{prop:properties-of-L-and-U}}\label{appsec:proof-of-prop-properties-of-L-and-U}
From the definition of $U(B,\delta;d)$, it is clear that it is a differentiable function of $\delta$.
Define
\[
g_j(\delta) := (2^j - 1)\left(\frac{1}{2^j} - \frac{\delta}{2^{j-1}}\right)^B + \left(\frac{1}{2^j} + (2^j - 1)\frac{\delta}{2^{j-1}}\right)^B.
\]
Clearly, $g_j(0) = 2^j(1/2^j)^B$ and
\begin{align*}
\frac{d}{d\delta}g_j(\delta) &= -B(2^j - 1)\left(\frac{1}{2^j} - \frac{\delta}{2^{j-1}}\right)^{B-1}\frac{1}{2^{j-1}} + B\left(\frac{1}{2^j} + (2^j - 1)\frac{\delta}{2^{j-1}}\right)^{B-1}\frac{2^{j} - 1}{2^{j-1}}\\
&= \frac{B(2^j - 1)}{2^{j - 1}}\left[\left(\frac{1}{2^j} + (2^j - 1)\frac{\delta}{2^{j-1}}\right)^{B-1} - \left(\frac{1}{2^j} - \frac{\delta}{2^{j-1}}\right)^{B-1}\right] \ge 0,
\end{align*}
for all $\delta \in [0, 1/2].$ Therefore, $\delta\mapsto g_j(\delta)$ is an increasing function and hence, $U(B, \delta; d)$ is an increasing function of $\delta$.

Note that
\[
L(B, 0; d) = U(B, 0; d) = \sum_{j=1}^d (-1)^{j-1}\binom{d}{j}2^j\left(\frac{1}{2^j}\right)^{B} = 1 - (1 - 2^{-B + 1})^d.
\]

From the definition of $U(B, \delta; d)$, 
\[
\frac{d}{d\delta}U(B, \delta; d) = \sum_{\substack{1\le j\le d,\\j\,\mathrm{odd}}} \binom{d}{j}\frac{d}{d\delta}g_j(\delta) \quad\Rightarrow\quad \frac{d}{d\delta}U(B,\delta;d)\bigg|_{\delta = 0} = 0.
\]
Moreover,
\begin{align*}
&\frac{d^2}{d\delta^2}U(B, \delta; d)\\ 
&= \sum_{\substack{1\le j\le d,\\j\,\mathrm{odd}}} \binom{d}{j}\frac{d^2}{d\delta^2}g_j(\delta)\\
&= \sum_{\substack{1\le j\le d,\\j\,\mathrm{odd}}} \binom{d}{j}\frac{B(B-1)(2^j - 1)}{2^{j-1}}\left[\left(\frac{1}{2^j} + (2^j - 1)\frac{\delta}{2^{j-1}}\right)^{B-2}\frac{2^j - 1}{2^{j-1}} + \frac{1}{2^{j-1}}\left(\frac{1}{2^j} - \frac{\delta}{2^{j-1}}\right)^{B-2}\right]\\
&\le \sum_{\substack{1\le j\le d,\\j\,\mathrm{odd}}} \binom{d}{j}\frac{B(B-1)(2^{j} - 1)}{2^{j-1}}\left(\frac{1}{2^j} + (2^j - 1)\frac{\delta}{2^{j-1}}\right)^{B-2}\\
&\le \sum_{\substack{1\le j\le d,\\j\,\mathrm{odd}}} \binom{d}{j}\frac{B(B-1)(2^{j} - 1)}{2^{j-1}} \le 2B(B-1)\sum_{\substack{1\le j\le d,\\j\,\mathrm{odd}}} \binom{d}{j} = 2^dB(B-1),
\end{align*}
because $\delta\in[0, 1/2]$.
\section{Proof of Theorem~\ref{thm:asymptotic-validity}}\label{appsec:proof-of-asymp-validity}
The proof essentially follows from Theorem~\ref{thm:miscoverage-MCH}. From Theorem~\ref{thm:miscoverage-MCH}, it follows that
\[
\mathbb{P}(\theta_0\notin\widehat{\mathrm{CI}}_{\alpha}) \le \mathbb{E}[U(B^*, \delta_{n/B_{\alpha,d}}; d)].
\]
By a Taylor series expansion around $0$, we get from Proposition~\ref{prop:properties-of-L-and-U} that
\[
U(B^*, \delta_{n/B_{\alpha,d}}; d) \le U(B^*, 0; d) + 2^{d-1}B_{\alpha,d}(B_{\alpha,d} - 1)\delta_{n/B_{\alpha,d}}^2.
\]
By definition of $B^*$, $\mathbb{E}[U(B^*, 0; d)] = \alpha$. This completes the proof of second inequality. The proof of the first inequality is exactly the same except now one uses the properties for $L(\cdot, \cdot; d)$.
\section[Proof of ]{Proof of~\eqref{eq:miscoverage-prob-MCH-non-MCH}}\label{appsec:proof-of-miscoverage-prob-MCH-non-MCH}
Recall from the ~\eqref{eq:expression-rectangular-hull-coverage}, the miscoverage probability of the parameter(origin) by the rectangular hull of $B$ i.i.d. estimators is given by $$\mathbb{P}(0\notin\RHull_B) = \sum_{\ell = 1}^d (-1)^{\ell - 1}\sum_{I\subseteq[d], |I| = \ell}\left[\sum_{\eta\in\{-1, 1\}^I} \left(\mathbb{P}\left(\mathcal{C}_{I,\eta}\right)\right)^B\right]$$ where 
$$\mathcal{C}_{I,\eta} = \bigcap_{j\in I}\left\{\eta_j e_j^{\top}\widehat{\theta}^{(1)} < 0\right\} = \left\{\widehat{\theta}^{(1)} \in \bigcap_{j\in I}\{x \in \mathbb{R}^d: \eta_j e_j^{\top}x < 0\} \right\}.
$$\\

Here $d=2$, $B=3$, and we are considering the two probability measures $\mathbb{P}_{\mu_{\texttt{M}}}$ and $\mathbb{P}_{\mu_{\texttt{NM}}}$ as described in \textit{Section 3.2}. So we compute the following probabilities:

\begin{align*}
    \mathbb{P}_{\mu_{\texttt{M}}}(\mathcal{C}_{\{1,2\},(-1,-1)}) ~&=~ \mu_{\texttt{M}}(\{x_1 > 0, x_2 > 0\}) ~=~  0.2,\\
    \mathbb{P}_{\mu_{\texttt{M}}}(\mathcal{C}_{\{1,2\},(1,-1)}) ~&=~ \mu_{\texttt{M}}(\{x_1 < 0, x_2 > 0\}) ~=~  0.2,\\
    \mathbb{P}_{\mu_{\texttt{M}}}(\mathcal{C}_{\{1,2\},(1,1)}) ~&=~ \mu_{\texttt{M}}(\{x_1 < 0, x_2 < 0\}) ~=~  0.2,\\
    \mathbb{P}_{\mu_{\texttt{M}}}(\mathcal{C}_{\{1,2\},(-1,1)}) ~&=~ \mu_{\texttt{M}}(\{x_1 > 0, x_2 > 0\}) ~=~  0.4,\\
    \mathbb{P}_{\mu_{\texttt{M}}}(\mathcal{C}_{\{1\},-1}) ~&=~ \mu_{\texttt{M}}(\{x_1 > 0\}) ~=~  0.6,\\
    \mathbb{P}_{\mu_{\texttt{M}}}(\mathcal{C}_{\{1\},1}) ~&=~ \mu_{\texttt{M}}(\{x_1 < 0\}) ~=~  0.4,\\
    \mathbb{P}_{\mu_{\texttt{M}}}(\mathcal{C}_{\{2\},-1}) ~&=~ \mu_{\texttt{M}}(\{x_2 > 0\}) ~=~  0.4,\\
    \mathbb{P}_{\mu_{\texttt{M}}}(\mathcal{C}_{\{2\},1}) ~&=~ \mu_{\texttt{M}}(\{x_2 < 0\}) ~=~  0.6.
\end{align*}

Thus from ~\eqref{eq:expression-rectangular-hull-coverage}, $$\mathbb{P}_{\mu_{\texttt{M}}}(0\notin\RHull_3) = 2(0.6)^3 + 2(0.4)^3 - 3(0.2)^3 - (0.4)^3 = 0.472$$

Similar calculations reveal
\begin{align*}
    \mathbb{P}_{\mu_{\texttt{NM}}}(\mathcal{C}_{\{1,2\},(-1,-1)}) ~&=~ \mu_{\texttt{NM}}(\{x_1 > 0, x_2 > 0\}) ~=~  0.1,\\
    \mathbb{P}_{\mu_{\texttt{NM}}}(\mathcal{C}_{\{1,2\},(1,-1)}) ~&=~ \mu_{\texttt{NM}}(\{x_1 < 0, x_2 > 0\}) ~=~  0,\\
    \mathbb{P}_{\mu_{\texttt{NM}}}(\mathcal{C}_{\{1,2\},(1,1)}) ~&=~ \mu_{\texttt{NM}}(\{x_1 < 0, x_2 < 0\}) ~=~  0.1,\\
    \mathbb{P}_{\mu_{\texttt{NM}}}(\mathcal{C}_{\{1,2\},(-1,1)}) ~&=~ \mu_{\texttt{NM}}(\{x_1 > 0, x_2 > 0\}) ~=~  0.6,\\
    \mathbb{P}_{\mu_{\texttt{NM}}}(\mathcal{C}_{\{1\},-1}) ~&=~ \mu_{\texttt{NM}}(\{x_1 > 0\}) ~=~  0.7,\\
    \mathbb{P}_{\mu_{\texttt{NM}}}(\mathcal{C}_{\{1\},1}) ~&=~ \mu_{\texttt{NM}}(\{x_1 < 0\}) ~=~  0.2,\\
    \mathbb{P}_{\mu_{\texttt{NM}}}(\mathcal{C}_{\{2\},-1}) ~&=~ \mu_{\texttt{NM}}(\{x_2 > 0\}) ~=~  0.2,\\
    \mathbb{P}_{\mu_{\texttt{NM}}}(\mathcal{C}_{\{2\},1}) ~&=~ \mu_{\texttt{NM}}(\{x_2 < 0\}) ~=~  0.7.
\end{align*} giving us $$\mathbb{P}_{\mu_{\texttt{NM}}}(0\notin\RHull_3) = 2(0.7)^3 + 2(0.2)^3 - (0.6)^3 - 2(0.1)^3 = 0.484$$ completing the proof.
\section{Proof of Lemma~\ref{lem:miscoverage-elementary-operation}}\label{appsec:proof-of-miscoverage-elementary-operation}
To prove the result, we first argue that the event of miscovering zero by the rectangular hull can be written only in terms of the sign vectors. Note that $0$ does not belong in the rectangular hull if and only if there exists a coordinate index $j\in\{1, 2, \ldots, d\}$ such that all of $e_j^{\top}X_i, 1\le i\le B$ are either positive or negative. Equivalently, there exists a coordinate index $j$ such that either all of $e_j^{\top}\mbox{sign}(X_i), 1\le i\le B$ are $-1$ or $1$. Therefore, 
\begin{align*}
&\mathbb{P}\left(0 \notin \bigotimes_{j=1}^d\left[\min_{1\le i\le B}e_j^{\top}X_i,\,\max_{1\le i\le B}e_j^{\top}X_i\right]\right)\\
&= \mathbb{P}\left(\bigcup_{j=1}^d \bigcup_{\eta\in\{-1, 1\}}\bigcap_{i=1}^B \left\{e_j^{\top}\mbox{sign}(X_i) = \eta\right\}\right)\\ 
&= \mathbb{P}\left(0 \notin \bigotimes_{j=1}^d\left[\min_{1\le i\le B}e_j^{\top}\mbox{sign}(X_i),\,\max_{1\le i\le B}e_j^{\top}\mbox{sign}(X_i)\right]\right).
\end{align*}
This implies that the miscoverage probability is the same for all probability measures $\mu$ such that the induced sign probability measure is $\mu_{\texttt{sgn}}$ and moreover, the miscoverage event can be written in terms of the sign vectors. Let $(s_1, \ldots, s_B)$ and $(p_1, \ldots, p_B)$ represent the sign vectors of $(X_1, \ldots, X_B)$ and $(Y_1, \ldots, Y_B)$, respectively. 

We now define a coupling through which we can generate $p_1, \ldots, p_B$ iid from $\nu_{\texttt{sgn}}$ given a sample $s_1, \ldots, s_B$ iid from $\mu_{\texttt{sgn}}$, and show that
\begin{equation}\label{eq:miscoverage-inclusion}
\left\{0 \notin \bigotimes_{j=1}^d \left[\min_{1\le i\le B}e_j^{\top}s_i,\,\max_{1\le i\le B}e_j^{\top}s_i\right]\right\} \subseteq \left\{0\notin \bigotimes_{j=1}^d \left[\min_{1\le i\le B}e_j^{\top}p_i,\,\max_{1\le i\le B}e_j^{\top}p_i\right]\right\},
\end{equation}
for such a coupling. This will complete the proof.

Let the parameters of the elementary operation $\mathcal{E}$ be $q \ge 0, (\eta, \eta')$. If $q = 0$, then there is nothing to prove because $\mu_{\mbox{sgn}}(\gamma) = \nu_{\mbox{sgn}}(\gamma)$ for all $\gamma\in U_d$. Assume $q > 0$. Let $U\sim\mbox{Uniform}[0, 1]$.
For any given $s\sim\mu_{\texttt{sgn}}$, define $p\in U_d$ as
\begin{equation}\label{eq:coupling-definition}
p = 
\begin{cases}
s, &\mbox{if }s \notin\{\eta, \eta'\},\\
\eta, &\mbox{if }s = \eta, U < (\mu_{\texttt{sign}}(\eta) - q)/\mu_{\texttt{sgn}}(\eta),\\
\eta', &\mbox{if }s = \eta, U \ge (\mu_{\texttt{sign}}(\eta) - q)/\mu_{\texttt{sgn}}(\eta),\\
\eta', &\mbox{if }s = \eta'. 
\end{cases}
\end{equation}
Note that $p = s$ if $s\neq \eta$ and $p$ is either $\eta$ or $\eta'$ if $s = \eta$.
Observe that
\begin{enumerate}
    \item For $\gamma\notin\{\eta, \eta'\}$, by~\eqref{eq:elementary-operation-definition},
    \[
    \mathbb{P}(p = \gamma) = \mathbb{P}(s = \gamma) = \mu_{\texttt{sgn}}(\gamma) = \nu_{\texttt{sgn}}(\gamma).
    \]
    \item For $\gamma = \eta$,
    \[
    \mathbb{P}(p = \gamma) = \mathbb{P}\left(s = \eta, U < \frac{\mu_{\texttt{sgn}}(\eta) - q}{\mu_{\texttt{sgn}}(\eta)}\right) = \frac{\mu_{\texttt{sgn}}(\eta) - q}{\mu_{\texttt{sgn}}(\eta)}\mu_{\texttt{sgn}}(\eta) = \mu_{\texttt{sgn}}(\eta) - q = \nu_{\texttt{sgn}}(\eta).
    \]
    \item For $\gamma = \eta'$,
    \[
    \mathbb{P}(p = \gamma) = \mathbb{P}\left(s = \eta'\right) + \mathbb{P}\left(s = \eta, U \ge \frac{\mu_{\texttt{sgn}}(\eta) - q}{\mu_{\texttt{sgn}}(\eta)}\right) = \mu_{\texttt{sgn}}(\eta') + q = \nu_{\texttt{sgn}}(\eta').
    \]
\end{enumerate}
Therefore, $p$ as defined in~\eqref{eq:coupling-definition} has $\nu_{\texttt{sgn}}$ as the probability measure. Now, consider the event of miscoverage of zero by the rectangular hull:
\[
\left\{0 \notin \bigotimes_{j=1}^d \left[\min_{1\le i\le B}e_j^{\top}s_i,\,\max_{1\le i\le B}e_j^{\top}s_i\right]\right\} = \bigcup_{j=1}^d \left\{0\notin\left[\min_{1\le i\le B}e_j^{\top}s_i,\,\max_{1\le i\le B}e_j^{\top}s_i\right]\right\}.
\]
To show that the inclusion~\eqref{eq:miscoverage-inclusion} holds true, it suffices to verify 
\begin{equation}\label{eq:miscoverage-inclusion-j}
\left\{0\notin\left[\min_{1\le i\le B}e_j^{\top}s_i,\,\max_{1\le i\le B}e_j^{\top}s_i\right]\right\} \subseteq \left\{0\notin\left[\min_{1\le i\le B}e_j^{\top}p_i,\,\max_{1\le i\le B}e_j^{\top}p_i\right]\right\}\quad\mbox{for all}\quad 1\le j\le d.
\end{equation}
Set $J = \{j:\,\eta_j\neq\eta_j'\}$. For $j\notin J$, $\eta_j = \eta_j'$ which implies $e_j^{\top}s_i = e_j^{\top}p_i$ by~\eqref{eq:coupling-definition}. This implies that the inclusion~\eqref{eq:miscoverage-inclusion-j} holds true for $j\notin J$. To verify~\eqref{eq:miscoverage-inclusion-j} for $j\in J$, note that if $s_i \notin \{\eta, \eta'\}$ for all $1\le i\le B$, then $s_i = p_i$ for all $1\le i\le B$ and inclusion~\eqref{eq:miscoverage-inclusion-j} holds true. If there exists at least one $i\in\{1, 2, \ldots, B\}$ such that $s_i = \eta$, then $e_j^{\top}s_i = \eta_j = 0$ for $j\in J$ and hence, the left hand side of~\eqref{eq:miscoverage-inclusion-j} is an empty set. This implies that the inclusion~\eqref{eq:miscoverage-inclusion-j} holds. If $s_i = \eta'$ for some $i\in\{1, 2, \ldots, B\}$, then by~\eqref{eq:coupling-definition} $p_i = \eta'$ and hence, if none of the $s_i$'s are $\eta$, then $s_i = p_i$ for all $1\le i\le B$ which implies~\eqref{eq:miscoverage-inclusion-j}. This completes the proof of~\eqref{eq:miscoverage-inclusion} and the result.
\section{Proof of Theorem~\ref{thm:general-miscoverage-MCH-validity}}\label{appsec:proof-of-general-miscoverage-MCH-validity}
Let $\mathcal{L}(\widehat{\theta}^{(j)} - \theta_0)$ be $\mu$. If $\mu\in\MCH$, then Theorem~\ref{thm:asymptotic-validity} completes the proof. If $\mu\notin\MCH$, then consider any $\nu_{\varepsilon}$ such that $\mu\preceq_{\texttt{MCH}}\nu_{\varepsilon}$ and 
\[
\texttt{OMB}(\nu_{\varepsilon}) \le (1 + \varepsilon)\Omedbias_P(\widehat{\theta}^{(j)}; \theta_0) \le (1 + \varepsilon)\delta_{n/B_{\alpha,d}}.
\]
From Lemma~\ref{lem:miscoverage-elementary-operation} and Theorem~\ref{thm:asymptotic-validity}, it follows that
\[
\mathbb{P}_{\mu}(\theta_0\notin\widehat{\mathrm{CI}}_{\alpha}) \le \mathbb{P}_{\nu_{\varepsilon}}(\theta_0\notin\widehat{\mathrm{CI}}_{\alpha}) \le \mathbb{E}[U(B^*, (1 + \varepsilon)\delta_{n/B_{\alpha,d}}; d)] \le \alpha + 2^{d-1}B_{\alpha,d}(B_{\alpha,d} - 1)(1 + \varepsilon)^2\delta^2_{n/B_{\alpha,d}}.
\]
Letting $\varepsilon\downarrow0$, we get the result.
\section{Proof of Proposition~\ref{prop:orthant-median-bias-null-est}}\label{appsec:proof-of-orthant-median-bias-null-est}
Without loss of generality, fix $\theta_0 = 0$. As mentioned, the result follows by noting that the degenerate measure at zero can be converted to an MCH distribution with equal probabilities on each orthants which has an orthant median bias of zero. Consider the elementary operations $\mathcal{E}_{\gamma}, \gamma\in V_d$ that takes moves $1/2^d$ from $\eta = (0, 0, \ldots, 0)$ to $\eta' = \gamma$. If $\mu$ is the probability measure of degenerate at $0$ distribution, then $\nu = (\prod_{\gamma\in V_d}\mathcal{E}_{\gamma})\mu$ is an MCH distribution and $\texttt{OMB}(\nu) = 0$. Therefore, $\Omedbias_P(\widehat{\theta}_n; \theta_0) = 0$.
\section{Proof of Proposition~\ref{prop:orthant-median-bias-1-d}}\label{appsec:proof-of-prop-orthant-median-bias-1-d}
The proof of Proposition~\ref{prop:orthant-median-bias-1-d} is very similar to those in the example illustrating elementary operations in $\mathbb{R}$. If $\widehat{\theta}_n$ does not place any mass at $\theta_0$, then $\widehat{\theta}_n - \theta_0\in\MCH$ and the result is trivially true. If $\mathbb{P}(\widehat{\theta}_n = \theta_0) = q > 0$, then setting
\[
\mathbb{P}(\widehat{\theta}_n < \theta_0) = p\quad\mbox{and}\quad\mathbb{P}(\widehat{\theta}_n > \theta_0) = r,
\]
we note that any chain of elementary operations yields an MCH distribution with mass $(p+q_1)$ for $(-\infty, \theta_0)$ and mass $(r+q -q_1)$ for $(\theta_0, \infty),$ where $q_1 \in [0, q]$. It is easy to verify that these are only possible MCH distributions that can be attained starting from $\widehat{\theta}_n - \theta_0$. For such an MCH distribution, the orthant median bias is $(1/2 - \min\{p+q_1, r + q - q_1\})_+$. 

{\color{black} We now claim that $\min_{q_1 \in [0,q]}(1/2 - \min\{p+q_1, r + q - q_1\})_+ = (1/2 - \min\{p+q,r+q\})_+$.
Firstly if $p>1/2$, $\min\{p+q_1, r + q - q_1\} = r + q - q_1$, hence minimization occurs at $q_1=0$, resulting in $(1/2 - \min\{p+q_1, r + q - q_1\})_+ = (1/2 - \min\{p,r+q\})_+ = (1/2 - \min\{p+q,r+q\})_+$. Similarly, if $r>\frac{1}{2}$, the minimization occurs at $q_1=q$, yielding $(1/2 - \min\{p+q_1, r + q - q_1\})_+ = (1/2 - \min\{p+q,r\})_+ = (1/2 - \min\{p+q,r+q\})_+$. Finally if $p,r \leq 1/2$, then $q = (1/2 - p) + (1/2 - r)$, so take $q_1 = 1/2 - p$. Then $(1/2 - \min\{p+q_1, r + q - q_1\})_+ = (1/2 - \min\{1/2, 1/2\})_+ = 0 = (1/2 - \min\{p+q,r+q\})_+$, thus completing the proof.} 


\section{Proof of Proposition~\ref{prop:convergence-in-distribution-OMB}}\label{appsec:proof-of-prop-convergece-in-distribution-OMB}
Let $\mu_n$ be the probability measure of $\widehat{\theta}_n - \theta_0$. Let $\mu$ be the probability measure of $W$. Set
\[
\delta_n = \sup_{A\in\mathcal{O}^d}\,|\mu_n(A) - \mu(A)|.
\]
Let $\nu_{n,\varepsilon}$ be any MCH measure such that $\mu_n\preceq_{\texttt{MCH}}\nu_{n,\varepsilon}$ and
\[
\texttt{OMB}(\nu_{n,\varepsilon}) \le (1 + \varepsilon)\Omedbias_P(\widehat{\theta}_n; \theta_0).
\]
It follows that for all $A\in\mathcal{O}^d$, $\nu_{n,\varepsilon}(A) \le \mu_n(A)$. This implies that 
\[
\texttt{OMB}(\nu_{n,\varepsilon}) = 2^{d-1}\left(\frac{1}{2^d} - \min_{A\in\mathcal{O}^d}\nu_{n,\varepsilon}(A)\right)_+ \ge 2^{d-1}\left(\frac{1}{2^d} - \min_{A\in\mathcal{O}^d} \mu(A) - \delta_n\right)_+ \ge \texttt{OMB}(\mu) - 2^{d-1}\delta_n.
\]
Therefore, letting $\varepsilon\downarrow0$, we get $\Omedbias_P(\widehat{\theta}_n; \theta_0) \ge \texttt{OMB}(\mu) - 2^{d-1}\delta_n$.

To prove the upper bound on the orthant median bias, note that for any orthant $A\in\mathcal{O}^d$, $\nu_{n,\varepsilon}(A) \ge \mu_{n}(A^\circ),$ where $A^\circ$ represents the interior of $A$. This implies $\nu_{n,\varepsilon}(A) \ge \mu_n(A) - \mu_n(\partial A) \ge \mu(A) - \delta_n - \mu_n(\partial A)$ where $\partial A = A\setminus A^\circ$. If $\eta\in V_d$ represents $A$, i.e., $\mbox{sign}(x) = \eta$ for all $x\in A^\circ$, then this is same as
\[
\nu_{n,\varepsilon}(A) \ge \mu(A) - \delta_n - \mu_n(\{x:\, e_j^{\top}\mbox{sign}(x)\eta_j \ge 0\}\setminus\{\eta\}).
\]
On the other hand, note that we have $|\mu_n(A^c) - \mu(A^c)| \le \delta_n$, for any $A\in\mathcal{O}^d$, where $A^c$ represents the complement of $A$. If $\eta_A\in\{-1, 1\}^d$ represents $A$, let $\mathcal{B}_{A}$ represent the set of all edges of $A$, i.e., 
\[
\mathcal{B}_{A} = \{\eta'\in U_d = \{-1, 0, 1\}^d:\, \eta'_j\eta_{A,j} \ge 0\mbox{ for all }1\le j\le d\mbox{ with equality for at least one }j\}.
\]
Recall $U_d = \{-1, 0, 1\}^d$ and $V_d = \{-1, 1\}^d$. With this notation, we can write
\[
A^c = \left(\bigcup_{\gamma\in V_d\setminus\{\eta_A\}} \{x:\,\mbox{sign}(x) = \gamma\}\right)\cup\left(\bigcup_{\gamma\in U_d\setminus (V_d\cup \mathcal{B}_{A})} \{x:\,\mbox{sign}(x) = \gamma\}\right).
\]
Therefore,
\begin{align*}
\mu_n(A^c) &= \sum_{\gamma\in V_d} \mu_{n,\texttt{sgn}}(\gamma) - \mu_{n,\texttt{sgn}}(\eta_A) + \sum_{\gamma\in U_d\setminus V_d} \mu_{n,\texttt{sgn}}(\gamma) - \mu_{n,\texttt{sgn}}(\mathcal{B}_A)\\
&= \sum_{B\in\mathcal{O}^d} \mu_n(B) - \sum_{B\in\mathcal{O}^d} \mu_{n,\texttt{sgn}}(\mathcal{B}_B) - \mu_{n}(A) + \sum_{\gamma\in U_d\setminus V_d} \mu_{n,\texttt{sgn}}(\gamma).
\end{align*}
Summing this equation over all $A\in\mathcal{O}^d$, this implies that
\[
\sum_{A\in\mathcal{O}^d} \mu_n(A^c) = (2^d-1)\sum_{A\in\mathcal{O}^d} \mu_n(A) + 2^d\sum_{\gamma\in U_d\setminus V_d} \mu_{n,\texttt{sgn}}(\gamma) - 2^d\sum_{A\in\mathcal{O}^d} \mu_{n,\texttt{sgn}}(\mathcal{B}_{A}).
\]
Because an $\gamma\in U_d\setminus V_d$ can act as an edge for $2^{d - \|\gamma\|_0}$ orthants, we get that
\[
\sum_{A\in\mathcal{O}^d} \mu_{n,\texttt{sgn}}(\mathcal{B}_A) = \sum_{\gamma\in U_d\setminus V_d} 2^{d-\|\gamma\|_0}\mu_{n,\texttt{sgn}}(\gamma).
\]
Therefore, 
\[
\sum_{A\in\mathcal{O}^d} \mu_n(A^c) = (2^d - 1)\sum_{A\in\mathcal{O}^d} \mu_n(A) - \sum_{\gamma\in U_d\setminus V_d} \left\{2^{2d-\|\gamma\|_0} - 2^d\right\} \mu_{n,\texttt{sgn}}(\gamma).
\]
Because $d - \|\gamma\|_0 \ge 1$ for all $\gamma\in U_d\setminus V_d$, we have $2^{2d - \|\gamma\|_0} \ge 2^{d+1}$ for all $d \ge 1$. Applying the same equation for $\mu$ (and using $\mu\in\MCH$), a simple manipulation yields
\begin{align*}
\sum_{\gamma\in U_d\setminus V_d} \{2^{2d - \|\gamma\|_0} - 2^d\}\mu_{n,\texttt{sgn}}(\gamma) &\le (2^d-1)\sum_{A\in\mathcal{O}^d}|\mu_n(A) - \mu(A)| + \sum_{A\in\mathcal{O}^d}|\mu_n(A^c) - \mu(A^c)|\\ 
&\le 2^d\sum_{A\in\mathcal{O}^d} |\mu_n(A) - \mu(A)| \le 2^d\delta_n.
\end{align*}
This implies that
\[
\mu_{n,\texttt{sgn}}(\mathcal{B}_A) \le \frac{2^d}{2^{d + 1} - 2^d}\delta_n \le \delta_n\quad\mbox{for all}\quad A\in\mathcal{O}^d.
\]
This, in turn, implies that $\nu_{n,\varepsilon}(A) \ge \mu(A) - \delta_n - 2^{d-1}\delta_n$. Therefore, letting $\varepsilon\downarrow0$, 
$$\Omedbias_P(\widehat{\theta}_n; \theta_0) \le \texttt{OMB}(\mu) + (1 + 2^{d-1})\delta_n \le \texttt{OMB}(\mu) + 2^d\delta_n.$$

\section{Proof of Theorem~\ref{thm:necessity-of-OMB}}\label{appsec:proof-of-thm-necessity-of-OMB}
    Let $\widehat{R}_{\gamma}$ be the rectangular hull of $\widehat{\mathrm{CI}}_{\gamma}$. Non-triviality of $\widehat{\mathrm{CI}}_{\gamma}$ implies non-triviality of $\widehat{R}_{\gamma}$. Moreover, we have $\widehat{R}_{\gamma}\supseteq\widehat{\mathrm{CI}}_{\gamma}$ and hence, 
    \[
    \mathbb{P}(\theta_0\notin\widehat{R}_{\gamma}) \le \mathbb{P}(\theta_0 \notin \widehat{\mathrm{CI}}_{\gamma}) \le \gamma.
    \]
    Let us write $\widehat{R}_{\gamma} = \prod_{j=1}^d [\widehat{\ell}_{j,\gamma},\,\widehat{u}_{j,\gamma}]$. Define, for independent random variables $U_1, \ldots, U_d\sim\mbox{Uniform}(0,1)$,
    \[
    \widetilde{R}_{\gamma} = \prod_{j=1}^d [\widehat{\ell}_{j,\gamma} - U_j\Delta_j,\,\widehat{u}_{j,\gamma} + U_j\Delta_j],\quad\mbox{where}\quad \Delta_j = \delta\max\{\widehat{u}_{j,\gamma} - \widehat{\ell}_{j,\gamma},\,\delta\},
    \]
    for any arbitrary $\delta > 0$. (One can let $\delta$ go to zero with the data to ensure that $\widetilde{R}_{\gamma}$ is not much larger than $\widehat{R}_{\gamma}$.) Clearly, $\widetilde{R}_{\gamma}\supseteq\widehat{R}_{\gamma}$ and hence, $\widetilde{R}_{\gamma}$ is also a valid confidence region.

    Define the randomized estimator $\widehat{\theta}$ by letting it be each of the $2^d$ vertices of $\widetilde{R}_{\gamma}$ with probability $1/2^d$. Formally, define $\widehat{\theta}$ by
    \[
    \mathbb{P}\left(e_j^{\top}\widehat{\theta} = \widehat{\ell}_{j,\gamma} - U_j\Delta_j\big|\widetilde{R}_{\gamma}\right) = \mathbb{P}\left(e_j^{\top}\widehat{\theta} = \widehat{u}_{j,\gamma} + U_j\Delta_j\big|\widetilde{R}_{\gamma}\right) = \frac{1}{2},
    \]
    and $e_1^{\top}\widehat{\theta}, \ldots, e_d^{\top}\widehat{\theta}$ (conditional on $\widetilde{R}_{\gamma}$) are independent. 

    Marginally, $e_j^{\top}\widehat{\theta}$ is a continuous random variable for any $j\in\{1, 2, \ldots, d\}$ because of the involvement of the independent uniform random variable $U_j$. Therefore, $\widehat{\theta} - \theta_0\in\MCH$. This implies that 
    \[
    \Omedbias(\widehat{\theta}; \theta_0) = 2^{d-1}\left(\frac{1}{2^d} - \min_{A\in\mathcal{O}^d}\mathbb{P}(\widehat{\theta} - \theta_0\in A)\right)_+.
    \]
    We now argue about the probabilities of orthants. Without loss of generality, consider $A = \{x:\,x_1 \ge 0, x_2 \ge 0, \ldots, x_d \ge 0\}$. 
    \begin{align*}
    \mathbb{P}(\widehat{\theta} - \theta_0\in A) &= \mathbb{E}[\mathbb{P}(\widehat{\theta} - \theta_0 \in A\big|\widetilde{R}_{\gamma})]\\
    &= \mathbb{E}\left[\prod_{j=1}^d\mathbb{P}\left(e_j^{\top}(\widehat{\theta} - \theta_0) \ge 0\big|\widetilde{R}_{\gamma}\right)\right]\\
    &= \mathbb{E}\left[\prod_{j=1}^d\left(\frac{1}{2}\mathbb{P}(\widehat{\ell}_{j,\gamma} - U_j\Delta_j \ge \theta_0|\widehat{R}_{\gamma}) + \frac{1}{2}\mathbb{P}(\widehat{u}_{j,\gamma} + U_j\Delta_j \ge \theta_0|\widehat{R}_{\gamma})\right)\right]\\
    &\ge \mathbb{E}\left[\frac{1}{2^d}\prod_{j=1}^d \left(\mathbf{1}\{\widehat{\ell}_{j,\gamma} \ge \theta_0\} + \mathbf{1}\{\widehat{u}_{j,\gamma} \ge \theta_0\}\right)\right]\\
    &\ge \frac{1}{2^d}\mathbb{P}\left(\bigcap_{j=1}^d \{\widehat{u}_{j,\gamma} \ge \theta_0\}\right) \ge \frac{1}{2^d}\mathbb{P}(\theta_0\in\widehat{R}_{\gamma}) \ge \frac{1 - \gamma}{2^d}.
    \end{align*}
    Similar calculations hold for all orthants and yield
    \[
    \Omedbias(\widehat{\theta}; \theta_0) \le 2^{d-1}\left(\frac{1}{2^d} - \frac{1 - \gamma}{2^d}\right) = \frac{\gamma}{2}.
    \]
    This completes the proof.
\end{document}